\def\COMMENT#1{}
\def\TASK#1{}
\newdimen\margin   % needed for macros \textdisplay & \ltextdisplay
\def\textno#1&#2\par{%
    \margin=\hsize
    \advance\margin by -4\parindent
           \setbox1=\hbox{\sl#1}%
    \ifdim\wd1 < \margin
       $$\box1\eqno#2$$%
    \else
       \bigbreak
       \hbox to \hsize{\indent$\vcenter{\advance\hsize by -3\parindent
       \sl\noindent#1}\hfil#2$}%
       \bigbreak
    \fi}
\newtheorem{thm}{Theorem}[section]
\newtheorem{define}[thm]{Definition}
\newtheorem{lem}[thm]{Lemma}
\newtheorem{prop}[thm]{Proposition}
\newtheorem{question}[thm]{Question}
\newtheorem*{thm*}{Theorem}
\newtheorem*{define*}{Definition}
\newtheorem*{examp*}{Example}
\newtheorem*{lem*}{Lemma}
\newtheorem*{claim*}{Claim}
\newtheorem*{fact*}{Fact}
\newtheorem*{col*}{Corollary}
\newtheorem*{conj*}{Conjecture}
\begin{document}

\title{On deficiency problems for graphs}

\author{Andrea Freschi, Joseph Hyde and Andrew Treglown}

\date{}
\begin{abstract}
Motivated by analogous questions in the setting of Steiner triple systems and Latin squares, Nenadov, Sudakov and Wagner [Completion and deficiency problems, Journal of Combinatorial Theory Series B, 2020]
recently introduced the notion of \emph{graph deficiency}. Given a global spanning property $\mathcal P$ and a graph $G$, the deficiency $\text{def}(G)$ of the graph $G$ with respect to the property $\mathcal P$ is the smallest non-negative integer $t$ such that the join $G*K_t$ has property $\mathcal P$.
In particular, Nenadov, Sudakov and Wagner raised the question of determining how many edges an $n$-vertex graph $G$ needs to ensure $G*K_t$ contains a $K_r$-factor (for any fixed $r\geq 3$). 
In this paper we resolve their problem fully. We also give an analogous result which forces $G*K_t$ to contain any fixed bipartite $(n+t)$-vertex graph of bounded degree and small bandwidth.
\end{abstract}
\maketitle
%\msc{05C35, 05C70}

\section{Introduction} \label{Introduction}
A natural question dating back to the 1970s asks for the order of the smallest complete Steiner triple system a fixed  partial Steiner triple system can be embedded into~(see e.g.~\cite{bh, ccl, nsw}). 
Similarly, there has been interest in establishing the order of the smallest Latin square that a fixed partial Latin square can be embedded into~(see e.g.~\cite{dh, evans, nsw}).

Motivated by these research directions, Nenadov, Sudakov and Wagner~\cite{nsw} introduced the notion of
\emph{graph deficiency}: for a graph $G$ and integer $t \geq 0$, denote by $G*K_t$ the \emph{join} of $G$ and $K_t$, which is the graph obtained from $G$ by adding $t$ new vertices and adding all edges incident to at least one of the new vertices.
Given  a global spanning property $\mathcal P$ and a graph $G$, the {\emph{deficiency}} $\text{def}(G)$ of the graph $G$ with respect to the property $\mathcal P$ is the smallest $t\geq 0$ such that the join $G*K_t$ has property $\mathcal P$.

Note that the following special type of deficiency problem has been previously studied: given a graph $H$ and  $n \in \mathbb N$, what is the minimum number of vertices needed to ensure any  $H$-packing on $n$ vertices (i.e., a collection of edge-disjoint copies of $H$ that together form a graph on  $n$ vertices) can be extended to an $H$-design (i.e., an $H$-packing of a complete graph)? See e.g.~\cite{furl, fur2} for background and results on this problem.

One of the main results in~\cite{nsw} is a bound on $\text{def}(G)$ with respect to the Hamiltonicity property for graphs $G$ of a given density.
More precisely, the following result answers the question of how many edges an $n$-vertex graph $G$ can have such that $G*K_t$ does not contain a Hamilton cycle.

\begin{thm}[Nenadov, Sudakov and Wagner~\cite{nsw}]\label{nswcycle}
Let $n$ and $t$ be integers and $G$ an $n$-vertex graph so that $G*K_t$ does not contain a Hamilton cycle. Then we have the following bounds on $e(G)$.
\begin{itemize}
\item If $n+t$ is even:
$$e(G) \leq \binom{n}{2} -
\begin{cases}
\left(t(n-1)-\binom{t}{2}\right) & \text{if $t\leq (n+4)/5$}\\[0.5em]
\left(\binom{\frac{n+t+2}{2}}{2}-1\right) & \text{if $t\geq (n+4)/5$.}
\end{cases}$$

\item If $n+t$ is odd:
$$e(G) \leq \binom{n}{2} -
\begin{cases}
\left(t(n-1)-\binom{t}{2}\right) & \text{if $t\leq (n+1)/5$}\\[0.5em]
\binom{\frac{n+t+1}{2}}{2} & \text{if $t\geq (n+1)/5$.}
\end{cases}$$

\end{itemize}
These bounds on $e(G)$ are sharp.
\end{thm}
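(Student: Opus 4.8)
The plan is to recast the problem as a classical extremal question about non-Hamiltonian graphs of prescribed minimum degree. Write $m=n+t$ and $H=G*K_t$, so that $H$ is a graph on $m$ vertices. Each of the $t$ new vertices is adjacent to all others, so every original vertex $v\in V(G)$ satisfies $\deg_H(v)=\deg_G(v)+t\ge t$, while each new vertex has degree $m-1\ge t$; hence $\delta(H)\ge t$. Moreover the edges of $H$ are exactly those of $G$ together with the $tn+\binom{t}{2}$ edges incident to the new vertices, so
\[
e(H)=e(G)+tn+\binom{t}{2}.
\]
Thus bounding $e(G)$ from above over non-Hamiltonian $H$ is equivalent to bounding $e(H)$, and I would attack the latter through the following theorem of Erd\H{o}s: if $1\le d\le (m-1)/2$ and $H$ is a non-Hamiltonian graph on $m$ vertices with $\delta(H)\ge d$, then
\[
e(H)\le \max\left\{\binom{m-d}{2}+d^2,\ \binom{\lceil (m+1)/2\rceil}{2}+\left\lfloor \tfrac{m-1}{2}\right\rfloor^2\right\}.
\]

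I would apply this with $d=t$, which is legitimate precisely when $t\le n-1$; this covers the whole non-trivial range, since once $t\ge n$ we have $\delta(H)\ge t\ge m/2$ and Dirac's theorem forces Hamiltonicity, making the statement vacuous. Subtracting $tn+\binom{t}{2}$ from each term of the maximum then reproduces the two cases. A direct computation shows that the first term $\binom{m-t}{2}+t^2=\binom{n}{2}+t^2$ translates to $\binom{n}{2}-\big(t(n-1)-\binom{t}{2}\big)$, matching the small-$t$ bound; and that, since $\lceil (m+1)/2\rceil$ equals $(n+t+2)/2$ when $n+t$ is even and $(n+t+1)/2$ when $n+t$ is odd, the second term translates to $\binom{n}{2}-\big(\binom{(n+t+2)/2}{2}-1\big)$ in the even case and to $\binom{n}{2}-\binom{(n+t+1)/2}{2}$ in the odd case. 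Finally I would locate the crossover by setting the two untranslated terms equal; solving the resulting quadratic in $t$ yields exactly $t=(n+4)/5$ in the even case and $t=(n+1)/5$ in the odd case, with the first term dominating below the threshold and the second above it, so that the piecewise bound is precisely as stated.

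For sharpness I would exhibit the two configurations underlying Erd\H{o}s's maximum and check that the relevant vertices can play the role of the join $K_t$, which also shows the relaxation from ``dominating clique of size $t$'' to ``$\delta\ge t$'' loses nothing. For the small-$t$ regime take $G=K_{n-t}\cup\overline{K_t}$: then $H=G*K_t$ has $t$ dominating vertices whose deletion leaves $t+1$ components, so $H$ is non-Hamiltonian, and $e(G)=\binom{n-t}{2}=\binom{n}{2}-\big(t(n-1)-\binom{t}{2}\big)$. For the large-$t$ regime take $H$ to be the split-type graph consisting of a clique $A$ of order $\lceil (m+1)/2\rceil$ together with an independent set of order $\lfloor (m-1)/2\rfloor$, each of whose vertices is joined to the same $\lfloor (m-1)/2\rfloor$-subset $A'$ of $A$; the vertices of $A'$ are dominating and $|A'|\ge t$ throughout this range, so any $t$ of them serve as the $K_t$, and deleting them recovers a graph $G$ on $n$ vertices meeting the bound, with $H$ non-Hamiltonian because deleting $A'$ leaves more than $|A'|$ components.

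The main obstacle is the upper bound, and it really lives inside Erd\H{o}s's theorem; if a self-contained argument is preferred I would reprove it in the present setting by passing to an edge-maximal non-Hamiltonian supergraph of $H$ and invoking the Bondy--Chv\'atal closure, which forces every pair of non-adjacent vertices to have degree sum at most $m-1$, and then running the standard degree-sequence and structural analysis to pin down the two extremal shapes. Secondary care is needed for the parity bookkeeping in handling $\lceil (m+1)/2\rceil$ and $\lfloor (m-1)/2\rfloor$, for the exactness of the crossover value, and for checking the boundary values of $t$ where one of the two extremal graphs degenerates.
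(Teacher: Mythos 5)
Your proof is correct, but note first that the paper does not actually prove Theorem~\ref{nswcycle}: it is quoted from Nenadov, Sudakov and Wagner~\cite{nsw}, and the only trace of its proof here is the remark in Section~\ref{proofofbandwidththmsection} that the original argument applies Chv\'atal's degree-sequence theorem~\cite{chvatal} to $G*K_t$ --- the template the authors reuse with the Knox--Treglown theorem to prove Theorem~\ref{bandwidththm}. That route orders the degrees of $G'=G*K_t$, uses $\delta(G')\ge t$ to locate an index $t\le i\le\lceil (n+t)/2\rceil-1$ at which Chv\'atal's condition fails, lower-bounds the number of missing edges by a concave quadratic in $i$, and minimises over the two endpoints of the admissible range, which yields exactly the two cases of the theorem. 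You instead reduce to Erd\H{o}s's 1962 theorem bounding $e(H)$ for non-Hamiltonian $H$ with $\delta(H)\ge d$; since that theorem is itself the optimised output of precisely such a degree-sequence computation, your proof is in substance the same argument with the numerology outsourced to a stronger black box, and what you gain is a cleaner derivation. I verified the translation: with $d=t$ (legitimate for $1\le t\le n-1$, the case $t\ge n$ being vacuous by Dirac and $t=0$ giving the trivial bound $\binom{n}{2}$), the term $\binom{n}{2}+t^2$ becomes $\binom{n}{2}-\bigl(t(n-1)-\binom{t}{2}\bigr)$, the second term becomes the stated expressions in both parities, and $t=(n+4)/5$, resp.\ $t=(n+1)/5$, is an exact root of the equation equating the two terms, with the correct term dominating on each side up to $t=n-2$, resp.\ $t=n-1$. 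Writing $m=n+t$, your sharpness examples are the two Erd\H{o}s extremal graphs $K_t*(\overline{K_t}\cup K_{n-t})$ and $K_{\lfloor (m-1)/2\rfloor}*(\overline{K_{\lfloor (m-1)/2\rfloor}}\cup K_{m-2\lfloor (m-1)/2\rfloor})$, and they meet the stated bounds exactly, including the ``$-1$'' in the even case --- which the paper's own $EX_1(n,t)$, built only for the asymptotic Theorem~\ref{bandwidththm}, misses by one edge. The remaining loose ends (degenerate values $n+t\le 2$, boundary $t$) are genuinely minor and are covered by your closing caveat.
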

% Tutte's theorem gives perfect matching result. Also easy deduction from this Ham cycle result.

Another line of inquiry in~\cite{nsw} concerns  the deficiency problem for $K_r$-factors.
Given graphs $H$ and $G$, an \emph{$H$-factor} in $G$ is a collection of vertex-disjoint copies of $H$ in $G$ that together cover all the vertices of $G$.
Note that $H$-factors are also often referred to as \emph{perfect $H$-tilings}, \emph{perfect $H$-packings} or \emph{perfect $H$-matchings}.
The following seminal result of Hajnal and Szemer\'edi~\cite{hs} determines the minimum degree threshold for forcing a $K_r$-factor in a graph $G$.

\begin{thm}[Hajnal and Szemer\'edi~\cite{hs}]\label{hs}
Every graph $G$ on $n$ vertices with $r|n$ and whose minimum degree satisfies $\delta (G) \geq (1-1/r)n$ contains a  $K_r$-factor. Moreover, there are $n$-vertex graphs $G$
 with $\delta (G) = (1-1/r)n-1$ that do not contain a $K_r$-factor.
\end{thm}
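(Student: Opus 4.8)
The statement has two halves: the existence of a $K_r$-factor under the degree bound, and its sharpness via an extremal construction. The sharpness half is elementary, so I would dispose of it first, and the complement viewpoint makes it transparent. Writing $n=rk$, a $K_r$-factor of $G$ is exactly a partition of $V(\bar G)$ into $k$ independent sets of size $r$, and the hypothesis $\delta(G)\ge (1-1/r)n$ is equivalent to $\Delta(\bar G)\le k-1$. For the extremal example I would take an independent set $I$ of size $k+1$ and let the remaining $(r-1)k-1$ vertices induce a clique joined completely to $I$; that is, $G=K_{(r-1)k-1}*\overline{K_{k+1}}$. Each vertex of $I$ has degree $(r-1)k-1=(1-1/r)n-1$ and every other vertex has degree $n-1$, so $\delta(G)=(1-1/r)n-1$. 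Since $I$ is independent, every copy of $K_r$ meets $I$ in at most one vertex, so $k$ disjoint copies cover at most $k<k+1=|I|$ of its vertices; hence $G$ has no $K_r$-factor, proving the bound $(1-1/r)n$ cannot be lowered.

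The existence half is the substance of the Hajnal--Szemer\'edi theorem, and I would not expect a short argument. The naive approach -- peel off one copy of $K_r$ and induct on $k$ -- fails, and seeing why is the crux: deleting $r$ vertices lowers every remaining degree by up to $r$, so $\delta(G)\ge (r-1)k$ degrades to $\delta(G')\ge (r-1)(k-1)-1$, exactly one short of what the induction on $k$ requires. Overcoming this off-by-one is what makes the theorem deep.

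Instead I would argue by contradiction from a maximal near-factor. Among all partitions of $V(G)$ into $k$ parts of size $r$, fix one, $V_1,\dots,V_k$, with the largest number of parts inducing a clique, and suppose this number is less than $k$. Choosing a defective part, say $V_k$, and a non-adjacent pair $\{x,y\}\subseteq V_k$, I would aim to produce a partition with strictly more clique-parts, contradicting maximality. Call a clique-part $V_i$ \emph{reachable} from $x$ if $x$ is adjacent to all of $V_i$ except one vertex $x_i$: then swapping $x$ and $x_i$ preserves the clique $V_i$ and merely relocates the defect. Iterating such swaps defines an auxiliary digraph on the parts, and an augmenting walk that ends at a part where the relocated vertex completes a clique yields the required improvement.

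The main obstacle is proving that such an augmenting structure always exists. Here the degree bound enters quantitatively: $x$ has at most $n-1-(r-1)k=k-1$ non-neighbours, so it can fail to reach only a controlled number of clique-parts, which seeds the search; but turning this local abundance into a global guarantee that the rotation terminates in a repair rather than cycling is the genuinely hard step. In the Hajnal--Szemer\'edi proof it is carried out by a delicate discharging and counting argument that tracks how non-neighbours are distributed across parts along a partial augmentation, and Kierstead and Kostochka later streamlined essentially this argument. I would also flag what to avoid: the Szemer\'edi regularity and blow-up machinery yields factor theorems of this shape, but only for large $n$ and with an additive error term in the degree threshold, so it cannot recover the exact bound $(1-1/r)n$ and is not suitable here.
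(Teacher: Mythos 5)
This statement is not proved in the paper at all: it is the Hajnal--Szemer\'edi theorem, imported verbatim from~\cite{hs} and used as a black box in the proof of Theorem~\ref{mainthm} (Case (iii)). So the relevant comparison is between your proposal and a citation, and against that standard your write-up is half a proof. The sharpness half is correct and complete: your graph $K_{(r-1)k-1}*\overline{K_{k+1}}$ (with $n=rk$) has minimum degree $(r-1)k-1=(1-1/r)n-1$, and since the independent set $I$ of size $k+1$ meets each copy of $K_r$ in at most one vertex, the $k$ copies of any putative factor cover at most $k<|I|$ vertices of $I$. This is exactly the extremal family the paper itself generalises as $EX_1(n,t,r)$ in Definition~\ref{exexdef1.4} (your example is $EX_1(n,0,r)$), so that part is both right and aligned with the paper's viewpoint.

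The existence half, however, is where the entire content of the theorem lives, and your proposal does not prove it. You correctly diagnose why naive induction fails, and you set up the right framework (a partition into $k$ parts of size $r$ maximising the number of clique parts, swaps that relocate a defect along reachable parts, an augmenting walk that repairs it), but you then explicitly defer the key step --- showing that the augmentation process terminates in an improvement rather than cycling --- to ``a delicate discharging and counting argument.'' That step is the theorem; without it the argument is a plan, not a proof. Concretely, what is missing is the lemma that from any non-clique part one can always find a sequence of swaps ending at a part where the displaced vertex completes a clique, derived from the bound of at most $k-1$ non-neighbours per vertex. Since the paper treats this as a known external result, the honest options are either to cite~\cite{hs} (or the Kierstead--Kostochka proof you allude to) for the existence direction, or to carry out that counting argument in full; as written, your proposal does neither.
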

More recently, K\"uhn and Osthus~\cite{kuhn2} determined, up to an additive constant, the minimum degree threshold for forcing an $H$-factor, for any fixed graph $H$.

The following result of Nenadov, Sudakov and Wagner~\cite{nsw} determines how many edges an $n$-vertex graph $G$ needs to guarantee that $G*K_t$ contains a $K_3$-factor (provided that $t$ is not too big compared to $n$).

\begin{thm}[Nenadov, Sudakov and Wagner~\cite{nsw}]\label{nswtriangle}
There exists $n_0 \in \mathbb{N}$ such that the following holds. 
Let $n,t \in \mathbb N$ so that $n \geq n_0$ and $3|(n+t)$, and let $G$ be an $n$-vertex graph such that $G*K_t$ does not contain a $K_3$-factor. If $t\leq n/1000$ then
$$e(G)\leq \binom{n}{2}-\binom{k}{2}-\begin{cases}
k(n-k) & \text{if $t$ is odd}\\
k(n-k-1) & \text{if $t$ is even,}
\end{cases} $$
where $k:=\lceil (t+1)/2 \rceil$. This bound on $e(G)$ is sharp.
\end{thm}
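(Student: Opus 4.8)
The plan is to establish the upper bound on $e(G)$ via its contrapositive and then exhibit matching extremal examples. Write $H:=G*K_t$, let $N:=n+t$, and let $W$ denote the set of $t$ universal vertices of $H$ coming from $K_t$; recall $3\mid N$. The contrapositive asserts that if $e(G)$ strictly exceeds the claimed bound, then $H$ contains a $K_3$-factor. The only general tool guaranteeing a $K_3$-factor is the Hajnal--Szemer\'edi theorem (Theorem~\ref{hs}), which needs $\delta(H)\ge \tfrac{2}{3}N$. Since every vertex of $W$ is universal, $\delta(H)=\delta(G)+t$, so this degree hypothesis is equivalent to $\delta(G)\ge \tfrac{2n-t}{3}$. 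A large edge count does not force this minimum degree, so the heart of the argument is to handle the vertices of $G$ falling below the threshold before invoking Theorem~\ref{hs}.

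Accordingly, I would set a threshold and let $L:=\{v\in V(G):\deg_G(v)<\tfrac{2n-t}{3}\}$ be the set of low-degree vertices, all of which lie in $V(G)$. The key observation is that a triangle of $H$ covering such a vertex must spend scarce vertices of $W$: an isolated vertex of $G$ has $H$-neighbourhood exactly $W$, so it can only sit in a triangle with \emph{two} vertices of $W$; a vertex with a single private neighbour can be covered using \emph{one} vertex of $W$; and only vertices lying in a triangle of $G$ can be covered for free. Using the edge-count hypothesis I would bound both $|L|$ and the total $W$-cost of covering $L$, the decisive quantity being the number of (near-)isolated vertices. The plan is then to extract vertex-disjoint triangles covering $L$, preferring type-A (all in $G$) and type-B (two $G$-vertices plus one $W$-vertex) triangles and resorting to two $W$-vertices only for genuinely isolated vertices. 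Because each removed triangle deletes exactly three vertices, divisibility by $3$ is preserved, and the leftover graph consists of high-degree $G$-vertices together with the unused universal vertices; a short computation (using $t\le n/1000$ and $n$ large) shows its minimum degree exceeds two-thirds of its order, so Theorem~\ref{hs} completes a $K_3$-factor.

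The main obstacle, and the source of the parity in the statement, is the precise budgeting of $W$. The worst case is when $L$ behaves like an independent set of isolated vertices: each then consumes two universal vertices, so $H$ has no $K_3$-factor as soon as $|L|\ge k$, where $2k>t$ forces $k=\lfloor t/2\rfloor+1=\lceil (t+1)/2\rceil$. I would show that whenever $e(G)$ exceeds the stated bound one can always either find at most $k-1$ such obstructing vertices (leaving enough budget, since $2(k-1)=2\lfloor t/2\rfloor\le t$) or restructure the covering to save a universal vertex; the delicate point is that in the even case one extra unit of slack in the inequality $2k>t$ permits $k$ additional edges, whereas in the odd case the inequality is tight and no extra edge is admissible. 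Controlling all competing obstructions (not only near-isolated vertices, but also unbalanced sparse neighbourhoods creating local bottlenecks) and verifying that exceeding the threshold always defeats every such obstruction is where the bulk of the casework lies.

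For sharpness I would exhibit, for each parity, an $n$-vertex graph meeting the bound with no $K_3$-factor. Fix $k=\lceil (t+1)/2\rceil$ and a set $S$ of $k$ vertices, making the other $n-k$ vertices a clique $R$. When $t$ is odd, take $S$ to be isolated, so $e(G)=\binom{n-k}{2}=\binom{n}{2}-\binom{k}{2}-k(n-k)$; each vertex of $S$ has $H$-neighbourhood exactly $W$ and hence needs a private pair of universal vertices, requiring $2k=t+1>t$ of them, so $H$ has no $K_3$-factor. When $t$ is even, additionally join every vertex of $S$ to one common vertex $u\in R$, giving $e(G)=\binom{n-k}{2}+k=\binom{n}{2}-\binom{k}{2}-k(n-k-1)$; now $u$ lies in only one triangle and can rescue at most one vertex of $S$ (the vertices of $S$ are pairwise non-adjacent), so covering $S$ still costs at least $2(k-1)+1=t+1>t$ universal vertices, and again no $K_3$-factor exists. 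These constructions match the two cases of the bound, establishing sharpness.
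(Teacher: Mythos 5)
Your sharpness constructions are correct and coincide with the paper's extremal graph $EX_2(n,t,3)$ (isolated set $B$ of size $k$, plus the extra apex vertex $u$ playing the role of the set $C$ of size $r-2-q=1$ when $t$ is even), and your high-level strategy for the upper bound --- cover the low-degree set $L$ by vertex-disjoint triangles while budgeting the universal vertices, then finish with Hajnal--Szemer\'edi --- is a reasonable one, broadly in the spirit of Nenadov--Sudakov--Wagner's original argument. However, as written the proposal has a genuine gap: the entire quantitative content of the theorem is the implication ``$e(G)$ exceeds the stated bound $\Rightarrow$ the covering of $L$ can be completed using at most $t$ universal vertices,'' and you explicitly defer this (``where the bulk of the casework lies'') without supplying it. The difficulty is real, not bookkeeping: the edge hypothesis only gives $\binom{n}{2}-e(G)<\binom{k}{2}+k(n-k)$, i.e.\ roughly $kn$ missing edges, which is compatible with having on the order of $6k\approx 3t$ vertices below your degree threshold $\tfrac{2n-t}{3}$ --- far more than the budget of $t$ universal vertices if each were charged even one unit. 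So one must argue that almost all low-degree vertices lie in triangles of $G$ (cost $0$), that the costly vertices are few and can be charged against the missing-edge count exactly tightly enough to distinguish $2k>t$ from $2k\le t$, and that the chosen triangles can be made vertex-disjoint (a vertex of degree $1$ whose unique neighbour is shared with many other degree-$1$ vertices, as in your own even-$t$ extremal example, shows the disjointness constraint is exactly where the parity case distinction arises). None of this is carried out, and the exact (not asymptotic) nature of the bound leaves no room for lossy estimates.

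For comparison, the paper does not prove Theorem~\ref{nswtriangle} directly (it is quoted from~\cite{nsw}); instead it derives it as the $r=3$ instance of Theorem~\ref{mainthm}, whose proof avoids your covering/budgeting analysis entirely. There, one takes $G$ edge-maximal subject to $G*K_t$ having no $K_r$-factor, uses the swapping arguments of Lemmas~\ref{vertexlemma} and~\ref{edgelemma} to force every vertex to have degree $n-1$ or degree at most $n-1-\lceil\tfrac{t+1}{r-1}\rceil$ and every edge to lie in an $r$-clique, and then inducts on $n$ by deleting a single vertex or a single $r$-clique; Hajnal--Szemer\'edi is invoked only once, when the minimum degree is already large. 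If you want a complete proof along your lines you would essentially have to reconstruct the NSW argument; adopting the inductive edge-maximality route would be substantially shorter and also removes the restrictions $n\ge n_0$ and $t\le n/1000$.
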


\subsection{A deficiency result for $K_r$-factors}
Nenadov, Sudakov and Wagner~\cite{nsw} state that {\it{`the study of deficiency concept by itself leads to intriguing open problems'}}. 
In particular, the first open problem~\cite[Section~7]{nsw} they raise is to extend Theorem~\ref{nswtriangle} to the full range of $t$, and moreover to resolve the analogous question for $K_r$-factors in general. 
In this paper we fully resolve this problem via the following theorem.

\begin{thm}\label{mainthm}
Let $n,t,r \in \mathbb{Z}$ with $n\geq 2$, $t\geq 0$
and $r \geq 3$ such that $t < (r-1)n$ and $r |(n+t)$. Further, let $k := \lceil\frac{t+1}{r-1}\rceil$ and $q$ be the integer remainder when $t$ is divided by $r-1$. 
Let $G$ be a graph on $n $ vertices such that $G*K_t$ does not contain a $K_r$-factor. Then $$e(G)\leq\max\left\{\binom{n}{2} - \binom{\frac{n+t}{r} + 1}{2}, \binom{n}{2} - \binom{k}{2} - k(n - k - (r - 2 - q))\right\}.$$
\end{thm}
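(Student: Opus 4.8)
The plan is to reduce the factor problem to a purely extremal statement about clique partitions of $G$, and then to attack that statement via the Hajnal--Szemer\'edi theorem applied to the high-degree part of $G$.

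\textbf{The reformulation.} The first step is the following equivalence. Write $N:=n+t$, so $r\mid N$. A $K_r$-factor of $G*K_t$ is a partition of the $N$ vertices into $N/r$ cliques of order $r$, and since the $t$ vertices of $K_t$ are universal they may be used as wildcards. Distributing the wildcards among the parts shows that $G*K_t$ contains a $K_r$-factor if and only if $V(G)$ can be partitioned into cliques $Q_1,\dots,Q_p$ of $G$, each of order at most $r$, with $\sum_i (r-|Q_i|)\le t$, equivalently with $p\le N/r$ parts. Writing $\rho_r(G)$ for the least number of cliques of order at most $r$ needed to partition $V(G)$, this says that $G*K_t$ has no $K_r$-factor precisely when $\rho_r(G)\ge N/r+1$. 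Moreover the \emph{clique deficiency} $r\rho_r(G)-n$ is congruent to $-n\equiv t$ modulo $r$, so the failure of a factor in fact forces $r\rho_r(G)-n\ge t+r$, a point that will be needed to track divisibility later.

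\textbf{The two extremal configurations.} The two terms in the maximum arise from the two natural obstructions to a small clique partition, which also certify sharpness. Taking $G$ to be $K_n$ with an independent set $I$ of order $N/r+1$ forces every clique of order at most $r$ to meet $I$ in at most one vertex, so $\rho_r(G)\ge |I|>N/r$; here $e(G)=\binom n2-\binom{N/r+1}{2}$, the first term. For the second, take an independent set $K$ of order $k=\lceil (t+1)/(r-1)\rceil$ whose vertices are all joined to a common set $A$ with $|A|=r-2-q$, and with $G-K$ complete. Every clique of order at most $r$ meeting $K$ lies inside $\{v\}\cup A$ for some $v\in K$, so the $k$ cliques covering $K$ contribute total deficiency at least $k(r-1)-|A|=t+1>t$; thus $\rho_r(G)>N/r$ and $e(G)=\binom n2-\binom k2-k(n-k-(r-2-q))$, the second term.

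\textbf{The upper bound.} For the bound itself I argue by contraposition: assuming $e(G)$ exceeds the maximum, I produce a clique partition of $V(G)$ with at most $N/r$ parts. Let $B$ be the set of vertices of $G$ of degree below $(1-1/r)n$, equivalently of degree at least $n/r$ in $\overline G$, and put $G':=G-B$. By the choice of threshold $\Delta(\overline{G'})<n/r$, so the Hajnal--Szemer\'edi theorem (in its equitable-colouring form, applied to $\overline{G'}$) partitions $V(G')$ into cliques of $G$ of order at most $r$ using at most $\lceil |V(G')|/r\rceil$ parts. It then remains to incorporate the vertices of $B$ one at a time, each either extending an existing clique at no extra cost or opening a new part. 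The number of non-edges of $G$ incident to $B$ is at least $|B|\cdot n/r$ up to lower-order terms, so an upper bound on $e(G)$ simultaneously controls $|B|$ and the number of new parts the vertices of $B$ force, and hence controls $\rho_r(G)$.

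\textbf{The main obstacle.} The crux, and the reason a maximum of two terms appears, is the fine optimisation in this last step: one must identify, among all admissible degree distributions on $B$ consistent with $\rho_r(G)\ge N/r+1$, the configuration maximising $e(G)$. Concentrating all the deficiency on a single large independent set returns the space-barrier term, while spreading it over $k$ vertices sharing a common neighbourhood of order $r-2-q$ returns the star-barrier term; showing that no intermediate configuration beats both, and correctly carrying the divisibility of $N$ by $r$ together with the residue $q$ through every estimate, is the technical heart of the argument.
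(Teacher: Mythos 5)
Your reformulation is correct and a nice observation: since $r\mid(n+t)$, the join $G*K_t$ has a $K_r$-factor if and only if $V(G)$ can be partitioned into at most $(n+t)/r$ cliques of $G$, each of order at most $r$; your two extremal configurations also match the paper's $EX_1(n,t,r)$ and $EX_2(n,t,r)$, and your deficiency computations for them are right. However, the proof of the upper bound — the actual content of the theorem — is not there. Everything after the application of Hajnal--Szemer\'edi to $G':=G-B$ is a strategy statement rather than an argument, and you yourself defer the decisive step (``showing that no intermediate configuration beats both \dots is the technical heart of the argument'') without carrying it out. Two concrete reasons the sketch as written cannot be completed in its current form. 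First, the theorem is exact and tight (the extremal graphs meet the bound with equality), so any counting performed ``up to lower-order terms'' is structurally incapable of proving it; in particular the claim that the non-edges at $B$ ``control the number of new parts'' loses exactly the precision where the two-term maximum is decided. Second, the absorption step fails quantitatively: assuming only $e(G)>\binom{n}{2}-\binom{(n+t)/r+1}{2}$, the bound on non-edges gives roughly $|B|\lesssim (n+t)^2/(2rn)$, which for small $t$ is of order $n/r$; since a vertex of $B$ that extends no existing clique opens a new part, the greedy count allows up to $\lceil n/r\rceil + |B|$ parts, vastly exceeding $(n+t)/r$. (There is also a smaller slip: equitable colouring of $\overline{G'}$ with $\Delta(\overline{G'})<n/r$ yields $\lceil n/r\rceil$ parts, not $\lceil |V(G')|/r\rceil$ as you state, since your degree threshold is relative to $n$, not to $|V(G')|$.)

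For comparison, the paper avoids this global optimisation entirely by inducting on $n$: one may assume $e(G)$ is edge-maximal subject to $G*K_t$ having no $K_r$-factor, and then two vertex-modification lemmas show that either some vertex has degree $0$ or $n-1$ (in which case deleting it reduces to a smaller instance whose extremal numbers can be compared to $e(EX_i(n,t,r))$), or every edge lies in a copy of $K_r$ and every degree is at most $n-1-\lceil\frac{t+1}{r-1}\rceil$, in which case either Hajnal--Szemer\'edi applies directly (minimum degree at least $n-\frac{n+t}{r}$ gives a $K_r$-factor, a contradiction) or one deletes a whole copy of $K_r$ through a low-degree vertex and again applies induction. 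The induction transfers the burden of the ``fine optimisation'' you identify onto elementary comparisons of $e(EX_i(n-1,\cdot,r))$ and $e(EX_i(n-r,\cdot,r))$ with $e(EX_i(n,t,r))$, which is what makes an exact result attainable. If you want to salvage your clique-partition framework, you would need to replace the greedy absorption of $B$ by an argument of comparable precision, and at present that step is missing.
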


When $(r-1)|(t+1)$, the first term is at most the second term precisely when $t \leq \frac{(r-1)n-r^2}{2r^2-2r+1}$.
Note that Theorem~\ref{mainthm} considers all interesting values of $n$ and $t$. Indeed, if $t \geq (r-1)n$ and $r|(n+t)$, then $G*K_t$ trivially contains a $K_r$-factor, even if $e(G)=0$. 
Further, in Section~\ref{exexsec} we provide extremal examples that demonstrate that the edge condition in Theorem~\ref{mainthm} cannot be lowered.
Perhaps surprisingly, the proof of Theorem~\ref{mainthm} is short, making use of a couple of vertex-modification tricks (see the proofs of Lemmas~\ref{vertexlemma} and \ref{edgelemma}) and Theorem~\ref{hs}.

Note that the $t=0$ case of Theorem~\ref{mainthm} determines the edge density threshold for forcing a $K_r$-factor in a graph. In fact, this is an old result due to Akiyama and Frankl~\cite{af}, so our result can be viewed as a deficiency generalisation of their theorem.

\subsection{A deficiency bandwidth theorem} 
One of the central results in extremal graph theory is the 
so-called \emph{Bandwidth  theorem} due to B\"ottcher, Schacht and Taraz~\cite{bot}.
A graph $H$ on $n$ vertices is said to have \emph{bandwidth at most $b$}, if there exists a 
labelling of the vertices of $H$ with the numbers $1, \dots ,n$ such that for every edge $ij\in E(H)$ we have $|i-j|\leq b$.
\begin{thm}[The Bandwidth  theorem, B\"ottcher, Schacht and Taraz~\cite{bot}]\label{bst}
Given any $r,\Delta \in \mathbb N$ and any $\gamma >0$, there exist constants $\beta >0$ and $n_0 \in \mathbb N$
such that the following holds. Suppose that $H$ is an $r$-chromatic graph on $n \geq n_0$ vertices with $\Delta (H) \leq \Delta$ 
and bandwidth at most $\beta n$. If $G$ is a graph on $n$ vertices with
$$\delta (G) \geq \left(1- \frac{1}{r}+\gamma \right)n,$$
then $G$ contains a copy of $H$. 
\end{thm}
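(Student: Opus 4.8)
The plan is to prove the theorem by the regularity method, combining Szemer\'edi's Regularity Lemma with the Blow-up Lemma. First I would apply the degree form of the Regularity Lemma to $G$ with parameters $0 < \varepsilon \ll d \ll \gamma$, obtaining a partition of $V(G)$ into an exceptional set $V_0$ with $|V_0| \le \varepsilon n$ together with equal-sized clusters $V_1, \dots, V_m$, all but a few pairs $(V_i, V_j)$ being $\varepsilon$-regular. I would then form the reduced graph $R$ on vertex set $\{1, \dots, m\}$, joining $i$ and $j$ whenever $(V_i, V_j)$ is $\varepsilon$-regular of density at least $d$. A standard counting argument shows that the hypothesis $\delta(G) \ge (1 - 1/r + \gamma)n$ forces $\delta(R) \ge (1 - 1/r + \gamma/2)m$; in particular $\delta(R) \ge (1-1/r)m$, so after discarding a few clusters to ensure $r \mid m$ Theorem~\ref{hs} yields a $K_r$-factor in $R$, partitioning the clusters into groups of $r$ that each span a $K_r$.

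Next I would exploit the structure of $H$. Since $H$ is $r$-chromatic of bounded degree and bandwidth at most $\beta n$, a suitable vertex labelling lets me cut $H$ into a linear sequence of consecutive \emph{blocks} $B_1, B_2, \dots$, each an interval of roughly $rn/m$ vertices. Because the bandwidth $\beta n$ is chosen much smaller than the length of a single block, every edge of $H$ lies within one block or joins two consecutive blocks. Each block carries a proper $r$-colouring, and I would match its $r$ colour classes to the $r$ clusters of one $K_r$ of the factor of $R$. Walking through the $K_r$'s in order, I assign the blocks of $H$ to them consecutively, using the dense regular pairs inside each $K_r$ to carry the within-block edges and the regular pairs bridging neighbouring $K_r$'s (which the surplus minimum degree guarantees) to carry the edges of $H$ crossing from one block to the next.

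With this assignment fixed, the final step is the embedding via the Blow-up Lemma of Koml\'os, S\'ark\"ozy and Szemer\'edi. After a cleaning step that makes the relevant regular pairs super-regular (deleting a small proportion of atypical vertices into $V_0$), each $K_r$ of clusters behaves like a complete blow-up into which the corresponding $r$-partite slice of $H$ embeds. A handful of \emph{image-restricting} conditions at the block boundaries ensures that the interface vertices shared between consecutive blocks are routed into the correct bridging regular pairs, so that the local embeddings glue together into a single global copy of $H$.

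The main obstacle will be balancing the sizes. The clusters $V_1, \dots, V_m$ all have equal size, but the colour classes of the blocks of $H$ need not be equitable, so a naive assignment would ask each cluster to host slightly the wrong number of vertices of $H$. The heart of the argument is therefore a redistribution step: using the bandwidth bound (which keeps the blocks short and hence locally flexible) together with the $\gamma/2$ slack in $\delta(R)$, I would shift a controlled number of vertices of $H$ between adjacent colour classes and across block boundaries until the demand on every cluster exactly matches its size, all the while preserving the bounded-degree and consecutive-blocks properties. Making this size-balancing simultaneously compatible with the super-regularity and image-restriction requirements of the Blow-up Lemma is the genuinely delicate part; the remainder is a careful but routine deployment of the regularity machinery.
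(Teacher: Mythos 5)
This statement is quoted background: the paper does not prove Theorem~\ref{bst} but cites it from B\"ottcher, Schacht and Taraz~\cite{bot}, so there is no in-paper proof to compare against. Your outline is a faithful summary of the strategy of the original proof (degree form of the Regularity Lemma, Theorem~\ref{hs} applied to the reduced graph, cutting $H$ into bandwidth-limited blocks, super-regularisation and the Blow-up Lemma with image restrictions, plus a balancing step), and in that sense it takes ``the same approach'' as the source. But as a proof it is a roadmap rather than an argument, and two of its load-bearing steps are not actually secured.

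First, a $K_r$-factor of the reduced graph $R$ is not enough: distinct copies of $K_r$ in the factor need not be joined by any regular pairs, so there is nothing to ``carry the edges of $H$ crossing from one block to the next.'' The surplus $\gamma/2$ in $\delta(R)$ does guarantee the required connecting structure, but not for free --- the original argument invokes the theorem of Koml\'os, S\'ark\"ozy and Szemer\'edi on the P\'osa--Seymour conjecture to find the $r$-th power of a Hamilton cycle in $R$, so that consecutive cliques of the factor overlap in $r$ common neighbourhoods and the blocks of $H$ can be handed from one clique to the next. You assert the bridging pairs exist but give no mechanism. Second, the balancing step you describe in the last paragraph is, as you say yourself, ``the genuinely delicate part'': making the colour classes of the blocks match the (equal) cluster sizes while preserving properness, bounded degree, the block structure, and compatibility with super-regularity and the image restrictions is where the bulk of \cite{bot} is spent, and deferring it leaves the proof incomplete. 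So the proposal correctly identifies the architecture of the known proof but does not constitute one; for the purposes of this paper the theorem should simply be cited, as the authors do.
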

Note that a $K_r$-factor has bandwidth $r-1$;
thus, one can view the bandwidth theorem as a vast asymptotic generalisation of  Theorem~\ref{hs}. 

Following the proof of Theorem~\ref{nswcycle} from~\cite{nsw}, and applying a theorem of Knox and the third author \cite{knox}, one can easily obtain
a deficiency result for embedding bipartite graphs of bounded degree and small bandwidth.

\begin{thm}\label{bandwidththm}
Given any $\Delta \in \mathbb{N}$ and $\varepsilon > 0$, there exist constants $\beta > 0$ and $n_0 \in \mathbb{N}$ such that the following holds. Let $t \in \mathbb N $ and $n\geq n_0$.
Let $H$ be a bipartite graph on $n + t $ vertices with $\Delta (H) \leq \Delta$ 
and bandwidth at most $\beta(n+t)$. Suppose that $G$ is a graph on $n$ vertices such that $G*K_t$ does not contain a copy of $H$. Then we have the following bound on $e(G)$.

 \begin{align*}
         e(G) \leq   \binom{n}{2} -     \begin{cases}
                                            \left(t(n-1) - \binom{t}{2} - \varepsilon n^2\right) \ \ & \mbox{if} \ \ t \leq \frac{n}{5} \\[0.5em]
                                            \left(\binom{\lceil\frac{n+t}{2}\rceil + 1}{2} - \varepsilon n^2\right) \ \ & \mbox{if} \ \ t > \frac{n}{5}.
                                        \end{cases}
\end{align*}
\end{thm}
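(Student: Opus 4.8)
The plan is to derive Theorem~\ref{bandwidththm} from a degree-sequence (Pósa-type) strengthening of the Bandwidth theorem for spanning bipartite graphs, due to Knox and the third author~\cite{knox}, in precisely the way Nenadov, Sudakov and Wagner derived Theorem~\ref{nswcycle} from Chvátal's Hamiltonicity criterion. Write $N:=n+t$ for the order of $G*K_t$. The result of~\cite{knox} guarantees that a graph $G'$ on $N$ vertices contains every bipartite $H$ with $\Delta(H)\le\Delta$ and bandwidth at most $\beta N$, provided the degree sequence $d_1\le\cdots\le d_N$ of $G'$ satisfies a Pósa-type condition of the form $d_i\ge i+\eta N$ for every $i\le N/2$, where $\eta=\eta(\varepsilon)$ (the precise hypothesis is in~\cite{knox}). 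I would fix $\eta$ small compared with $\varepsilon$, take $\beta,n_0$ from~\cite{knox}, and argue the contrapositive: if $G*K_t$ does not contain $H$, then this condition fails at some index $j$, and I turn this failure into the stated bound on $e(G)$.

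First I would record the degree sequence of $G':=G*K_t$. If $G$ has degrees $\delta_1\le\cdots\le\delta_n$, then each original vertex has $G'$-degree $\delta_i+t$ while each of the $t$ new vertices has degree $N-1$; in particular $\delta(G')\ge t$ and the new vertices sit at the top of the ordering. Failure of the condition supplies an index $j\le N/2$ with $d_j<j+\eta N$. Since $\delta(G')\ge t$ forces $d_j\ge t$, the feasible range is $t-\eta N<j\le N/2$; and since $N-1>\tfrac N2+\eta N$, the set $S$ of the $j$ vertices of smallest $G'$-degree lies entirely in $V(G)$, each having $G$-degree less than $j-t+\eta N$.

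I would then bound the edges of $G$. Set $f(j):=\binom{n-j}{2}+j(j-t)$. Counting the edges inside $V(G)\setminus S$ and those meeting $S$ gives
\[
e(G)\;\le\;\binom{n-j}{2}+\sum_{v\in S}\deg_G(v)\;<\;\binom{n-j}{2}+j\bigl(j-t+\eta N\bigr)\;=\;f(j)+j\eta N .
\]
A short computation rewrites $f(j)=\binom{n}{2}-jN+\tfrac32 j^2+\tfrac12 j$, which is convex in $j$, so over the feasible range its maximum occurs at an endpoint. Here $f(t)=\binom{n}{2}-\bigl(t(n-1)-\binom{t}{2}\bigr)$ and $f(N/2)=\binom{n}{2}-\binom{\lceil N/2\rceil+1}{2}+O(N)$, which are exactly the two terms in the statement. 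Moreover $f(t)-f(N/2)=\tfrac18(n-t)(n-5t)+O(n)$, so $f(t)\ge f(N/2)$ precisely when $t\le n/5$ (up to lower-order terms); the larger endpoint is therefore $f(t)$ for $t\le n/5$ and $f(N/2)$ for $t>n/5$. Since $j\eta N\le\eta N^2$ and the $O(N)$ errors are each at most $\varepsilon n^2$ once $\eta$ is small and $n\ge n_0$, this reproduces the two cases of the theorem.

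The genuinely substantive ingredient is the degree-sequence Bandwidth theorem of~\cite{knox}; granting it, what remains is a one-variable optimisation. The two points that need care are: (i) pinning the lower end of the feasible range at $j\approx t$ --- it is the bound $\delta(G*K_t)\ge t$ that prevents arbitrarily small failure indices and thereby forces the first extremal term rather than a spurious smaller one; and (ii) tracking the floors, ceilings and linear-order terms carefully enough that all of the slack is genuinely absorbed into $\varepsilon n^2$. The convexity of $f$ is what collapses the optimisation onto the two endpoints, and hence explains both the appearance of a maximum of two expressions and the transition at $t=n/5$.
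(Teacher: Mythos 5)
Your proposal is correct and follows essentially the same route as the paper: both derive the theorem from the Knox--Treglown degree-sequence bandwidth theorem by locating a failure index $j$ in the range $(t-\eta(n+t),\,(n+t)/2]$ (the lower end pinned by $\delta(G*K_t)\ge t$) and optimising a quadratic in $j$ over that interval, with the two endpoints yielding the two cases and the transition at $t\approx n/5$. The only cosmetic difference is that you bound $e(G)$ directly via a convex function and take a maximum, whereas the paper bounds the number of missing edges via the complementary concave function and takes a minimum.
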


Observe that the bounds on $e(G)$ in Theorem~\ref{bandwidththm} are, up to error terms, exactly the same as those in Theorem~\ref{nswcycle}. Moreover, the extremal examples that show the condition on $e(G)$ in Theorem~\ref{nswcycle}
is sharp also demonstrate that, for many graphs $H$, the condition on $e(G)$ in
 Theorem~\ref{bandwidththm} is asymptotically best possible (see Section~\ref{exexsec}).
Notice the statement of Theorem~\ref{bandwidththm} is only interesting for $t< n-1$. 
Indeed, if $t\geq n-1$ then even if $G$ has no edges, $G*K_t$ contains all $(n+t)$-vertex bipartite
graphs $H$ (we just embed the smallest colour class into $K_t$).

\smallskip 

The paper is organised as follows.
We introduce some graph theoretic notation in Section~\ref{notationsec}. In Section~\ref{exexsec} we provide extremal examples for Theorems~\ref{mainthm} and \ref{bandwidththm}. In Section~\ref{mainthmsec} we prove Theorem~\ref{mainthm} and then in Section~\ref{proofofbandwidththmsection} we prove Theorem~\ref{bandwidththm}. Some concluding remarks are given in Section~\ref{conclusionsec}.

\section{Notation}\label{notationsec}

Let \(G\) be a graph. We define \(V(G)\) to be the vertex set of \(G\) and \(E(G)\) to be the edge set of \(G\). Let \(X \subseteq V(G)\). Then \(G[X]\) is the \textit{graph induced by \(X\) on \(G\)} and has vertex set \(X\) and edge set \(E(G[X]) := \{xy \in E(G): x,y \in X\}\). For each \(x \in V(G)\), we define the \textit{neighbourhood of \(x\) in \(G\)} to be \(N_G(x):= \{y\in V(G): xy \in E(G)\}\) and define \(d_G(x) := |N_G(x)|\). 

We write \(0 < a \ll b \ll c < 1\) to mean that we can choose the constants \(a,b,c\) from right to left. More precisely, there exist non-decreasing functions \(f: (0,1] \to (0,1]\) and \(g: (0,1] \to (0,1]\) such that for all \(a \leq f(b)\) and \(b \leq g(c)\) our calculations and arguments in our proofs are correct. Larger hierarchies are defined similarly.

%%%%%%%5
\section{The extremal constructions for Theorem~\ref{mainthm} and Theorem~\ref{bandwidththm}}\label{exexsec}

In this section we will give the extremal constructions that match the upper bounds in Theorems~\ref{mainthm} and \ref{bandwidththm}. Firstly, let us consider those for Theorem~\ref{mainthm}.

\begin{define}\label{exexdef1.4}
Let $n,r,t\in\mathbb{Z}$ with $r \geq 3$, $t \geq 0$ and $n \geq 2$ such that $t < (r-1)n$ and $r |(n+t)$. Further, let $k :=  \lceil\frac{t+1}{r-1}\rceil$ and $q$ be the integer remainder when $t$ is divided by $r-1$. We define graphs $EX_1(n,t,r)$ and $EX_2(n,t,r)$ as follows:
\begin{itemize}
\item Let $K := K_n$ and $A\subseteq K$ such that $A = K_{\frac{n+t}{r} + 1}$. Define $EX_1(n,t,r)$ to be the graph obtained by removing $E(A)$ from $K$.
\item Consider a set of isolated vertices $B$ where $|B| =  k$ and $K_{n-k}$, and let $C \subseteq V(K_{n-k})$ where $|C| = r-2-q$.
Define $EX_2(n,t,r)$ to be the graph obtained by taking the disjoint union of $B$ and $K_{n-k}$ and adding every edge incident to a vertex in $C$.
\end{itemize}
\end{define}

Observe that \[e(EX_1(n,t,r)) = \binom{n}{2} - \binom{\frac{n+t}{r} + 1}{2}\] and \[e(EX_2(n,t,r)) = \binom{n}{2} - \binom{k}{2} - k(n - k - (r - 2 - q)).\]
Hence  
\begin{small}\[\max\left\{\binom{n}{2} - \binom{\frac{n+t}{r} + 1}{2}, \binom{n}{2} - \binom{k}{2} - k(n - k - (r - 2 - q))\right\}  = \max\left\{e(EX_1(n,t,r)), e(EX_2(n,t,r))\right\}.\]\end{small}

Next we show that $EX_1(n,t,r)*K_t$ and $EX_2(n,t,r)*K_t$ do not contain $K_r$-factors, that is, they are extremal graphs for Theorem~\ref{mainthm}.

\begin{prop}
 $EX_1(n,t,r)*K_t$ and $EX_2(n,t,r)*K_t$ do not contain $K_r$-factors.
\end{prop}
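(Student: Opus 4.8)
The plan is to treat the two extremal graphs separately, in each case exhibiting an independent set that is too large relative to its neighbourhood to be covered by vertex-disjoint copies of $K_r$. Throughout I write $T$ for the vertex set of the $K_t$ that is joined on, so $|T| = t$, and recall that since $r\mid(n+t)$ a $K_r$-factor of a graph on $n+t$ vertices consists of exactly $(n+t)/r$ vertex-disjoint copies of $K_r$.

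First consider $EX_1(n,t,r)*K_t$. By construction this is precisely the complete graph on $n+t$ vertices with the edges inside $A$ deleted, so $A$ is an independent set of size $\frac{n+t}{r}+1$. In any $K_r$-factor each copy of $K_r$ contains at most one vertex of $A$, so the $(n+t)/r$ copies together cover at most $(n+t)/r$ vertices of $A$. Since $|A| = \frac{n+t}{r}+1 > \frac{n+t}{r}$, no such factor can exist.

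For $EX_2(n,t,r)*K_t$ I would first record the relevant structure: $B$ is an independent set with $|B| = k$, and every neighbour of $B$ lies in $C\cup T$, since in $EX_2$ the only vertices adjacent to $B$ are those of $C$, and joining on $K_t$ adds exactly the vertices of $T$. Thus $N(B)\subseteq C\cup T$ with $|C\cup T| = (r-2-q)+t$. In any $K_r$-factor each of the $k$ vertices of $B$ lies in a distinct copy of $K_r$ (as $B$ is independent), and the remaining $r-1$ vertices of each such copy must be neighbours of the corresponding $B$-vertex, hence must lie in $C\cup T$. As these copies are vertex-disjoint, covering $B$ requires $k(r-1)$ distinct vertices of $C\cup T$.

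It therefore suffices to show $k(r-1) > |C\cup T|$. Writing $t = (r-1)\lfloor t/(r-1)\rfloor + q$ with $0\le q\le r-2$, one checks that $k = \lceil\frac{t+1}{r-1}\rceil = \lfloor t/(r-1)\rfloor + 1$, whence
$$k(r-1) = t - q + (r-1) = \big((r-2-q)+t\big) + 1 = |C\cup T| + 1 > |C\cup T|.$$
Hence $B$ cannot be covered and $EX_2(n,t,r)*K_t$ has no $K_r$-factor. The only real content here is the elementary bookkeeping relating $k$, $q$, $r$ and $t$; once the identity $k = \lfloor t/(r-1)\rfloor + 1$ is in hand, both parts reduce to one-line pigeonhole counts, so I expect no genuine obstacle beyond getting this arithmetic exactly right.
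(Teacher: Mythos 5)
Your proposal is correct and follows essentially the same argument as the paper: for $EX_1$ a pigeonhole count on the independent set $A$ of size $\frac{n+t}{r}+1$, and for $EX_2$ the observation that covering the independent set $B$ forces $k(r-1) = t-q+r-1 = |C|+t+1$ distinct vertices into $C\cup V(K_t)$, one more than are available. Your bookkeeping identity $k=\lfloor t/(r-1)\rfloor+1$ matches the paper's computation $\lceil\frac{t+1}{r-1}\rceil = \frac{t-q}{r-1}+1$ exactly.
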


\begin{proof}
Firstly, let us consider $EX_1(n,t,r)*K_t$. For a contradiction, assume that $EX_1(n,t,r)*K_t$ contains a $K_r$-factor $\mathcal T$. 
Then each vertex in $V(A)$ belongs to a different copy of $K_r$ in $\mathcal T$. This implies $\frac{n+t}{r} = |\mathcal T| \geq |V(A)| = \frac{n+t}{r} + 1$, a contradiction. 
Hence $EX_1(n,t,r)*K_t$ does not contain a $K_r$-factor. 

Now let us consider $EX_2(n,t,r)*K_t$. For a contradiction, assume that $EX_2(n,t,r)*K_t$ contains a $K_r$-factor $\mathcal T$.
Then every vertex of $B$ belongs to a different copy of $K_r$ in $\mathcal T$. Thus, by construction, the copies of $K_r$ in $\mathcal T$ covering $B$ must cover at least 
\begin{align*}
    &  \left\lceil\frac{t+1}{r-1}\right\rceil \cdot (r-1)   
    =   \left(\frac{t-q}{r-1} + 1\right) \cdot (r-1) =  t - q + r - 1 > t + |C|
\end{align*}
vertices in the copy of $K_t$ and $C$, a contradiction. Hence $EX_2(n,t,r)*K_t$ does not contain  a $K_r$-factor.
\end{proof}

We now give the extremal constructions which, excluding error terms, match the upper bounds given in Theorem~\ref{bandwidththm}.

\begin{define}\label{exexdef1.6}
Let $n, t \in \mathbb{N}$ such that $\lceil\frac{n+t}{2}\rceil < n$. We define graphs $EX_1(n,t)$ and $EX_2(n,t)$ as follows: 
\begin{itemize}
    \item Let $K := K_n$ and $A \subseteq K$ such that $A = K_{\lceil\frac{n+t}{2}\rceil + 1}$. Define $EX_1(n,t)$ to be the graph obtained by removing $E(A)$ from $K$. 
    \item Define $EX_2(n,t)$ to be the disjoint union of a set of $t$ isolated vertices and a clique of size $n - t$.
\end{itemize}
\end{define}

One can see that the extremal examples in Definition~\ref{exexdef1.6} have the same construction to those in Definition~\ref{exexdef1.4} for $r = 2$, except that in Definition~\ref{exexdef1.6} we omit the condition $2|(n+t)$ and add the condition that $\lceil\frac{n+t}{2}\rceil < n$ (in order for $EX_1(n,t)$ to be well-defined). 

Observe that $e(EX_1(n,t))$ and $e(EX_2(n,t))$ asymptotically match the upper bounds given in Theorem~\ref{bandwidththm}. Indeed, \[e(EX_1(n,t)) = \binom{n}{2} - \binom{\lceil\frac{n+t}{2}\rceil + 1}{2} \ \ \text{  and  } \ \  e(EX_2(n,t)) = \binom{n}{2} - \left(t(n-1) - \binom{t}{2}\right).\] 

We conclude this section by showing that $EX_1(n,t)*K_t$ and $EX_2(n,t)*K_t$ do not contain certain $(n+t)$-vertex bipartite graphs $H$. 

\begin{define}
Let $\mathcal{H}_1$ be the class of bipartite graphs $H$ on $n+t$ vertices with largest independent set of size $\lceil\frac{n+t}{2}\rceil$.
Let $\mathcal{H}_2$ be the class of bipartite graphs $H$ on $n+t$ vertices which do not have a tripartition $(A,B,C)$ of $V(H)$ such that $|A| = n-t$, $|B| = |C| = t$ and every vertex in $C$ is only adjacent to vertices in $B$. 
\end{define}

For example, the Hamilton cycle (when $n+t$ is even), the disjoint union of an isolated vertex and a cycle on $n+t-1$ vertices (when $n+t$ is odd) and $K_{s,s}$-factors (for any fixed $s \in \mathbb N$) belong to $\mathcal{H}_1$;  the Hamilton cycle (when $n+t$ is even), $K_{s,s}$-factors (for any fixed $s \in \mathbb N$ so that $s$ does not divide $t$) and any bipartite graph with minimum degree at least $t+1$ belong to $\mathcal{H}_2$.

\begin{prop}
$EX_1(n,t)*K_t$ does not contain any graph in $\mathcal{H}_1$ and $EX_2(n,t)*K_t$ does not contain any graph in $\mathcal{H}_2$. 
\end{prop}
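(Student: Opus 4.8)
The plan is to prove the two non-containment statements separately, in each case exploiting a counting or structural obstruction. Consider first the graph $EX_1(n,t)*K_t$. This graph is obtained from $K_{n+t}$ by deleting all edges inside a clique $A$ of size $\lceil\frac{n+t}{2}\rceil+1$; equivalently, the set $V(A)$ becomes an independent set of size $\lceil\frac{n+t}{2}\rceil+1$ in $EX_1(n,t)*K_t$. Now take any $H\in\mathcal H_1$, so $H$ is bipartite on $n+t$ vertices whose largest independent set has size exactly $\lceil\frac{n+t}{2}\rceil$. I would argue by contradiction: suppose $EX_1(n,t)*K_t$ contains a copy of $H$. Since this copy spans all $n+t$ vertices (the graphs have equal order), the independent set $V(A)$ of size $\lceil\frac{n+t}{2}\rceil+1$ would, inside the copy of $H$, induce an independent set of $H$ of the same size, contradicting that $H$'s largest independent set has size only $\lceil\frac{n+t}{2}\rceil$. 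Hence no copy of any $H\in\mathcal H_1$ exists.

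For the second statement, consider $EX_2(n,t)*K_t$, which is the disjoint union of $t$ isolated vertices and a clique $K_{n-t}$, joined to a new clique $K_t$. Label the three parts of the vertex set: let $B'$ be the set of $t$ isolated vertices of $EX_2(n,t)$, let $A'$ be the vertex set of the $K_{n-t}$, and let $D$ be the vertex set of the added $K_t$. The key structural observation is that the $t$ vertices in $B'$ have neighbourhood exactly $D$: they are adjacent to all of $D$ (via the join) but to nothing else (they are isolated in $EX_2(n,t)$, and there are no edges within $B'$). I would take any $H\in\mathcal H_2$ and suppose for contradiction that $EX_2(n,t)*K_t$ contains a spanning copy of $H$. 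This copy is an isomorphism identifying $V(H)$ with $A'\cup B'\cup D$, where $|A'|=n-t$, $|B'|=|D|=t$. Under this identification the tripartition $(A',B',D)$ of $V(H)$ satisfies exactly the forbidden configuration: every vertex in $B'$ is adjacent in the host graph only to vertices in $D$, so in the copy of $H$ every vertex of $B'$ is joined only to vertices of $D$. But membership in $\mathcal H_2$ precisely forbids the existence of such a tripartition $(A,B,C)$ with $|A|=n-t$, $|B|=|C|=t$ in which the vertices of $C$ see only $B$. Renaming $(A,B,C)=(A',D,B')$ yields the contradiction.

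The only subtlety to verify carefully is the matching of the size constraints and the direction of the adjacency condition between the definition of $\mathcal H_2$ and the structure of $EX_2(n,t)*K_t$. In $\mathcal H_2$ the part $C$ of size $t$ is the one whose vertices are only adjacent to $B$ (of size $t$), with $A$ the part of size $n-t$; in the host graph it is the isolated-vertex set $B'$ (size $t$) whose vertices see only $D$ (size $t$), with $A'$ of size $n-t$. Thus the correct correspondence is $A\leftrightarrow A'$, $B\leftrightarrow D$, $C\leftrightarrow B'$, and one checks that this respects all three cardinality requirements. I expect no genuine obstacle here: both arguments are purely structural, relying only on the independent-set size in the first case and on identifying the forbidden tripartition in the second. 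The main thing to get right is simply the bookkeeping of which part plays which role in the definition of $\mathcal H_2$.
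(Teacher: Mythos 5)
Your proof is correct and follows essentially the same approach as the paper: the first part uses the independent set of size $\lceil\frac{n+t}{2}\rceil+1$ inherited from $A$, and the second identifies the forbidden tripartition via the $t$ isolated vertices whose only neighbours in the join lie in $V(K_t)$. Your version just spells out the bookkeeping more explicitly than the paper does.
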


\begin{proof}
Firstly, let us consider $EX_1(n,t)*K_t$. Since $EX_1(n,t)$ contains an independent set of size $\lceil\frac{n+t}{2}\rceil + 1$, any bipartite graph from $\mathcal{H}_1$ cannot be in $EX_1(n,t)$. 

Now let us consider $EX_2(n,t)*K_t$. Since $EX_2(n,t)$ has a set of $t$ isolated vertices and $|K_t| = t$, we require that any bipartite graph $H$ spanning $EX_2(n,t)*K_t$ must have a tripartition $(A,B,C)$ of $V(H)$ such that $|A| = n-t$, $|B| = |C| = t$ and every vertex in $C$ is only adjacent to vertices in $B$, where $B = V(K_t)$ and $C$ is the set of $t$ isolated vertices in $EX_2(n,t)$. Hence $EX_2(n,t)*K_t$ does not contain any graph in $\mathcal{H}_2$. 
\end{proof}

%%%%%%%%%%%
\section{Proof of Theorem~\ref{mainthm}}\label{mainthmsec}

The proof of Theorem~\ref{mainthm} follows an inductive argument on the number of vertices $n$ of $G$. Given a graph $G$ such that $G*K_t$ does not contain a $K_r$-factor, we apply an appropriate vertex-modification procedure which, roughly speaking,
allows us to assume $G$ is locally isomorphic to one of the two extremal examples. This allows us to remove such local structure from $G$ and apply induction.

The vertex-modification procedures are described by the following two structural lemmas regarding graphs $G$ with the property that $G*K_t$ does not contain a $K_r$-factor. 
Lemma~\ref{vertexlemma} allows us to assume that the degree of a vertex is either $n-1$ (which is the degree of all vertices in $V(EX_1(n,t,r))\setminus A$ and $C\subset V(EX_2(n,t,r))$) or at most $n-1-\lceil\frac{t+1}{r-1}\rceil$ (which is the degree of all vertices in $V(EX_2(n,t,r))\setminus(B\cup C)$).
Lemma~\ref{edgelemma} allows us to assume that either each edge of $G$ belongs to some $r$-clique  or there is a vertex with degree $n-1$.

\begin{lem}\label{vertexlemma}
Let $t \geq 0$, $r\geq 3$ 
and $G$ be a graph on $n$ vertices such that $e(G)$ is maximal with respect to the property that $G*K_t$ does not contain a $K_r$-factor.
Then for every vertex $v\in V(G)$ either $d_G(v)=n-1$ or $d_G(v)\leq n-1-\lceil\frac{t+1}{r-1}\rceil$. 
\end{lem}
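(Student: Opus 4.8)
The plan is to argue by contradiction against the edge-maximality of $G$. Set $k := \lceil\frac{t+1}{r-1}\rceil$ and suppose some vertex $v$ violates the conclusion, i.e. $n-1-k < d_G(v) < n-1$; so $v$ has at least one non-neighbour but fewer than $k$ of them. I would produce an $n$-vertex graph $G'$ with $e(G') > e(G)$ whose join $G'*K_t$ still contains no $K_r$-factor, contradicting maximality. The natural candidate is to delete $v$ and re-introduce it as a universal vertex: let $G' := (G-v)*K_1$. Since $d_G(v) < n-1$ we have $e(G') = e(G) - d_G(v) + (n-1) > e(G)$. Moreover $G'*K_t = (G-v)*K_1*K_t = (G-v)*K_{t+1}$, so it suffices to show that $(G-v)*K_{t+1}$ contains no $K_r$-factor.

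First I would record the standard description of cliques in a join: every copy of $K_r$ in $H*K_s$ consists of a clique of $H$ of some size $j \leq r$ together with $r-j$ of the $s$ universal vertices. The key claim is then: \emph{if $v$ has fewer than $k$ non-neighbours in $G$ and $(G-v)*K_{t+1}$ admits a $K_r$-factor $\mathcal{T}$, then $G*K_t$ admits one too}, contradicting the hypothesis on $G$. To prove the claim, let $S$ be the set of non-neighbours of $v$, so $|S| \leq k-1$. Each vertex of $S$ lies in exactly one copy of $K_r$ in $\mathcal{T}$, so at most $k-1$ copies meet $S$; each such copy contains at least one vertex of $G-v$ and hence uses at most $r-1$ universal vertices. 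If every universal vertex lay in one of these copies we would obtain $t+1 \leq (k-1)(r-1)$, contradicting the inequality $(k-1)(r-1) < t+1$, which is exactly the defining property of $k = \lceil\frac{t+1}{r-1}\rceil$. Hence some copy $Q \in \mathcal{T}$ contains a universal vertex $w$ while all of its $(G-v)$-vertices are neighbours of $v$ in $G$.

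To finish the claim I would swap $v$ into the factor. Replace $w$ by $v$ in $Q$, and relabel the remaining $t$ universal vertices of $(G-v)*K_{t+1}$ as the $t$ universal vertices of $G*K_t$ (this gives a bijection between the two vertex sets, fixing $V(G)\setminus\{v\}$, sending $w \mapsto v$). Every copy of $\mathcal{T}$ other than $Q$ maps to a copy of $K_r$ in $G*K_t$ unchanged, while $Q$ maps to $\{v\} \cup (Q\setminus\{w\})$, which is again a copy of $K_r$ since $v$ is adjacent to all $G$-vertices of $Q$ (by the choice of $Q$) and to every universal vertex. This produces a $K_r$-factor of $G*K_t$, establishing the claim. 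Consequently $(G-v)*K_{t+1} = G'*K_t$ has no $K_r$-factor, so $G'$ contradicts the maximality of $e(G)$, completing the proof.

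The main obstacle is the counting step: one must guarantee a universal ``slot'' that $v$ can occupy, and this hinges precisely on the arithmetic $(k-1)(r-1) < t+1$ forced by the ceiling in the definition of $k$, combined with the observation that each copy meeting $S$ is blocked yet can host at most $r-1$ universal vertices. Everything else is routine bookkeeping about joins and edge counts.
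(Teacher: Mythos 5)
Your proposal is correct and follows essentially the same route as the paper: make $v$ universal, use edge-maximality to obtain a $K_r$-factor of the modified join, and then swap $v$ into a copy of $K_r$ all of whose remaining vertices are neighbours of $v$, located via a counting argument resting on $(k-1)(r-1) < t+1$ where $k = \lceil\frac{t+1}{r-1}\rceil$. Your identification $G'*K_t = (G-v)*K_{t+1}$, which lets you treat $v$ as a $(t+1)$-st universal vertex and count copies meeting the non-neighbour set $S$ rather than copies covering $\{v\}\cup V(K_t)$, is a tidy reformulation that avoids the paper's case analysis but is not a different argument.
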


\begin{proof}
Suppose there exists a vertex $v\in V(G)$ such that 
$$n-1-\left\lceil\frac{t+1}{r-1}\right\rceil<d_G(v)<n-1.$$
Let $G'$ be the graph obtained from $G$ by adding every possible edge incident to $v$, that is, $d_{G'}(v) = n-1$. 
Since $e(G)$ is maximal with respect to $G*K_t$ not containing a $K_r$-factor, we must have that $G'*K_t$ contains a $K_r$-factor $\mathcal{T'}$. 
Using $\mathcal{T'}$, we will now construct a $K_r$-factor $\mathcal{T}$ in $G*K_t$, giving us a contradiction.

Let $K^v$ be the copy of $K_r$ in $\mathcal{T'}$ covering $v$. If $K^v \subseteq G*K_t$ then we can take $\mathcal{T}: = \mathcal{T'}$. 
Hence assume $K^v \nsubseteq G*K_t$. If there exists a copy $K'$ of $K_r$ in $\mathcal{T'}$ that lies entirely in $K_t$, then, for any $u \in V(K')$, we can take 
\[\mathcal{T} := \left(\mathcal{T'}\setminus\{K^v, K'\}\right) \cup \{(G*K_t)[\{u\} \cup \left(V(K^v)\setminus \{v\}\right)], (G*K_t)[\{v\} \cup \left(V(K')\setminus \{u\}\right)]\}.\] 
Hence assume no such copy of $K_r$ in $\mathcal{T'}$ exists. 
Since $K^v \nsubseteq G*K_t$ and no copy of $K_r$ in $\mathcal T'$ lies entirely in $K_t$, the number of copies of $K_r$ in $\mathcal{T'}$ which cover some vertex of $\{v\} \cup V(K_t)$ in $G'*K_t$ is at least $\lceil\frac{t+1}{r-1}\rceil$.
But $d_G(v) > n-1 - \lceil\frac{t+1}{r-1}\rceil$, hence there exists a copy $\hat{K}$ of $K_r$ in $\mathcal{T'}$ which intersects $\{v\} \cup V(K_t)$ and whose vertices are all neighbours of $v$ in $G*K_t$ or $v$ itself. 
Now, if $v \in V(\hat{K})$, then $K^v=\hat{K} \subseteq G*K_t$, a contradiction to our previous assumption. 
Hence $V(\hat{K}) \cap V(K_t) \neq \emptyset$ and, for any $u \in V(\hat{K}) \cap V(K_t)$, we can take  \[\mathcal{T} := (\mathcal{T'}\setminus\{K^v, \hat{K}\}) \cup \{(G*K_t)[\{u\} \cup \left(V(K^v)\setminus \{v\}\right)] , (G*K_t)[\{v\} \cup (V(\hat{K})\setminus \{u\})]\}.\] 
\end{proof}

\begin{lem}\label{edgelemma}
Let $t \geq 0$, $r\geq 3$.
Let $G$ be a graph on $n$ vertices such that $G*K_t$ does not contain a $K_r$-factor and suppose $G$ contains an edge which is not contained in any copy of $K_r$ in $G$. Then there exists a graph $G'$ on $n$ vertices such that $G'*K_t$ does not contain a $K_r$-factor, $e(G)\leq e(G')$ and $G'$ has a vertex of degree $n-1$. 
\end{lem}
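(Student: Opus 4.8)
The plan is to keep one endpoint of the bad edge, say $x$, and promote it to a universal vertex, while simultaneously stripping the other endpoint $y$ of its ``private'' neighbours so as not to lose edges. Concretely, let $xy \in E(G)$ be an edge lying in no copy of $K_r$, and define $G'$ on $V(G)$ by (i) adding every edge at $x$, so that $d_{G'}(x) = n-1$, and (ii) deleting every edge $yp$ with $p \in N_G(y) \setminus (N_G(x) \cup \{x\})$, so that $N_{G'}(y) = (N_G(x) \cap N_G(y)) \cup \{x\}$, leaving all other adjacencies unchanged. By construction $x$ has degree $n-1$, so it remains only to control the edge count and to rule out a $K_r$-factor.

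For the edge count, the additions and deletions are disjoint (the added edges are incident to $x$, the deleted ones are not), so a direct count gives $e(G') - e(G) = (n - 1 - d_G(x)) - |N_G(y) \setminus (N_G(x) \cup \{x\})| = n - |N_G(x) \cup N_G(y)|$, which is nonnegative since $N_G(x) \cup N_G(y) \subseteq V(G)$. Hence $e(G') \geq e(G)$, as required; this step is routine.

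The substance is showing $G' * K_t$ has no $K_r$-factor, for which I would argue by contradiction from a putative factor $\mathcal T'$ and convert it into a factor of $G*K_t$. Observe first that the only edges of $G'*K_t$ absent from $G*K_t$ are the new edges at $x$, so every copy of $\mathcal T'$ avoiding $x$ already lies in $G*K_t$; the same holds for the copy $K^y$ containing $y$, since each surviving neighbour of $y$ lies in $(N_G(x) \cap N_G(y)) \cup \{x\} \cup V(K_t)$. Thus it suffices to relocate $x$ into the same copy as $y$: if $x$ and $y$ sit in a common copy $\{x,y\} \cup Q$ of $\mathcal T'$, then $Q \subseteq (N_G(x) \cap N_G(y)) \cup V(K_t)$, so that copy, together with the rest of $\mathcal T'$, lies entirely in $G*K_t$, giving the contradiction.

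The crux is therefore to perform this relocation. Writing $K^x = \{x\} \cup W$ and $K^y = \{y\} \cup U$ for the (distinct) copies containing $x$ and $y$, I would swap $x$ into $K^y$ by exchanging it with a vertex $u_0 \in U$, forming the copies $\{x,y\} \cup (U \setminus \{u_0\})$ and $\{u_0\} \cup W$. For this to be valid one needs $u_0$ adjacent to all of $W$, and here the hypothesis that $xy$ lies in no $K_r$ of $G$ does the real work: were $U$ to contain no vertex of $K_t$, then $U$ would be an $(r-1)$-clique inside $N_G(x) \cap N_G(y)$, whence $\{x,y\} \cup U$ would be a clique of size $r+1$ in $G$ and $xy$ would lie in a copy of $K_r$, a contradiction. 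So $U$ must meet $V(K_t)$, and any such (universal) $u_0$ completes the swap, yielding a $K_r$-factor of $G*K_t$. I expect this last observation --- that the no-$K_r$ condition forces a $K_t$-vertex into $y$'s copy, which is exactly what unblocks the exchange --- to be the only delicate point; the remaining checks that the two new copies are cliques of $G*K_t$ are straightforward.
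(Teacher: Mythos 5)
Your proof is correct, and it takes a genuinely different route from the paper's. The paper works with a maximal clique $Q$ containing the bad edge $xy$: it restricts $x$'s neighbourhood to $Q$, makes every vertex of $V(Q)\setminus\{x\}$ universal, bounds the edge change via the observation that no vertex has $|Q|$ neighbours in $Q$, and then reroutes a putative $K_r$-factor using an injection from the leftover vertices of $Q$ into the $K_t$-part of $x$'s copy. You instead modify only the two endpoints of the bad edge (promote $x$ to universal, trim $y$ to its common neighbourhood with $x$), get the edge count from the identity $e(G')-e(G)=n-|N_G(x)\cup N_G(y)|\geq 0$, and reroute the factor by a single swap of $x$ with a $K_t$-vertex in $y$'s copy. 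In both arguments the hypothesis that $xy$ lies in no $K_r$ is the engine, but it enters differently: the paper uses it through the maximality of $Q$ (which forces $|Q|<r$ and makes the injection possible), whereas you use it to show $y$'s copy must meet $V(K_t)$ (otherwise $\{x,y\}\cup U$ would be an $(r+1)$-clique of $G$), which is exactly what unblocks your swap. Your version is arguably more economical --- it avoids introducing $Q$ and the injection entirely, and the case analysis (common copy versus distinct copies, with the distinct case resolved by one exchange) is shorter than the paper's redistribution of all copies meeting $Q$. The only cosmetic difference in the conclusion is that your universal vertex is $x$ rather than $y$, which is immaterial for the lemma.
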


\begin{proof}
Let $xy$ be an edge in $G$ which is not contained in any copy of $K_r$. Let $Q$ be a clique of maximal size containing $xy$ and set $\ell := |V(Q)|$. Observe that every vertex in $G$ has at most $\ell-1$ neighbours in $Q$ as otherwise $xy$ would lie in an $(\ell+1)$-clique. Thus
$$\sum_{v\in V(Q)} d_G(v) \leq n(\ell-1).$$
Let $G'$ be the graph obtained by deleting all edges between $x$ and vertices in $V(G)\setminus V(Q)$ and, subsequently, adding any missing edge incident to any vertex in $V(Q)\setminus\{x\}$. Note that $Q$ is still an $\ell$-clique in $G'$ and
$$\sum_{v\in V(Q)} d_{G'}(v) = n(\ell-1).$$
Hence $e(G)\leq e(G')$. Also, $G'$ is a graph on $n$ vertices and $d_{G'}(y) = n-1$. It remains to show that $G'*K_t$ does not contain a $K_r$-factor.
Suppose, for a contradiction, that $G'*K_t$ does contain a $K_r$-factor $\mathcal{T'}$. We will now use $\mathcal{T}'$ to construct a new $K_r$-factor $\mathcal{T}$ in $G'*K_t$ which does not contain any edge $vw$ where $v \in V(Q)$ and $w \in V(G')\setminus V(Q)$. Such a $K_r$-factor $\mathcal{T}$ is also a $K_r$-factor in $G*K_t$, giving us a contradiction.

Suppose that the copy $K^x$ of $K_r$ in $\mathcal{T'}$ covering $x$ has exactly $s$ vertices in $K_t$. Then $K^x$ has exactly $r-s$ vertices in $Q$. Thus there are $\ell-r+s$ remaining vertices in $V(Q)\setminus V(K^x)$.
Note that $\ell-r+s\leq s$, hence there is an injection $f : V(Q)\setminus V(K^x) \to V(K^x)\cap V(K_t)$. We construct our $K_r$-factor $\mathcal{T}$ as follows: for every copy of $K_r$ in $\mathcal{T}'$ intersecting $Q$ other than $K^x$, we substitute all vertices lying in its intersection with $Q$ with their images under $f$. 
Finally, we take the copy of $K_r$ formed by the $\ell$-clique $Q$ and the $s - (\ell - r + s) = r-\ell$ vertices in $V(K^x)\cap V(K_t)$ which do not appear in the image of $f$. Observe that $\mathcal{T}$ does not use any edge $vw$ where $v \in V(Q)$ and $ w \in V(G)\setminus V(Q)$, and we are done.
\end{proof}

Before proceeding to the  proof of Theorem~\ref{mainthm}, we prove the following technical lemma.

\begin{lem}\label{technicallemma}
Let $n,t,r \in \mathbb N$ such that the following holds: $n,r \geq 3$; $r-1$ divides $t+1$; $r$ divides $n+t$; $t+1<(r-1)(n-1)$. If
\begin{equation}\label{condition}
e(EX_1(n-1,t+1,r)) < e(EX_2(n-1,t+1,r))\footnote{Note that $EX_1(n-1,t+1,r)$, $EX_2(n-1,t+1,r)$ and $EX_2(n,t,r)$ are well-defined since the assumptions in Definition~\ref{exexdef1.4} are satisfied.}
\end{equation}
then
\begin{equation}\label{claim}
e(EX_2(n-1,t+1,r)) + (n-1) \leq e(EX_2(n,t,r)).
\end{equation}
\end{lem}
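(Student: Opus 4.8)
The plan is to substitute the definitions from Definition~\ref{exexdef1.4} into the three quantities appearing in (\ref{condition}) and (\ref{claim}), use the divisibility hypotheses to obtain clean closed forms, and thereby reduce both inequalities to statements about a single integer parameter.

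First I would record closed forms. Since $r-1$ divides $t+1$, the remainder of $t$ modulo $r-1$ is $q=r-2$, so $r-2-q=0$ and $k=(t+1)/(r-1)$ exactly; hence
\[
e(EX_2(n,t,r)) = \binom{n}{2} - \binom{k}{2} - k(n-k).
\]
For the shifted parameters $(n-1,t+1)$ the remainder of $t+1$ modulo $r-1$ is $0$, while (using $t+2=(r-1)k+1$ and $0<1<r-1$) the relevant ceiling equals $k+1$; this gives
\[
e(EX_2(n-1,t+1,r)) = \binom{n-1}{2} - \binom{k+1}{2} - (k+1)(n-k-r),
\]
\[
e(EX_1(n-1,t+1,r)) = \binom{n-1}{2} - \binom{\frac{n+t}{r}+1}{2},
\]
where the last line uses $(n-1)+(t+1)=n+t$.

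I would then dispose of the conclusion (\ref{claim}). Subtracting the first two displays and using $\binom{n}{2}-\binom{n-1}{2}=n-1$ and $\binom{k+1}{2}-\binom{k}{2}=k$, a short cancellation yields
\[
e(EX_2(n,t,r)) - e(EX_2(n-1,t+1,r)) - (n-1) = n-(k+1)r.
\]
Hence (\ref{claim}) is equivalent to the single inequality $n\geq (k+1)r$, and it remains to derive this from hypothesis (\ref{condition}).

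Next I would analyse (\ref{condition}). Put $m:=\frac{n+t}{r}-k$, which is an integer by $r\mid(n+t)$ and in fact equals $\frac{n-k-1}{r}$, so that $n=rm+k+1$. Computing the difference $e(EX_2(n-1,t+1,r))-e(EX_1(n-1,t+1,r))$ and substituting, the step I expect to be the crux is the clean factorisation
\[
2\bigl(e(EX_2(n-1,t+1,r))-e(EX_1(n-1,t+1,r))\bigr) = (m-1)\bigl(m-2(k+1)(r-1)\bigr);
\]
the notable feature is that $m=1$ is always a root. Verifying this identity is the only genuine computation in the proof, and it is what makes the rest of the argument immediate.

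Finally I would combine the pieces. The hypothesis $t+1<(r-1)(n-1)$ gives $k<n-1$, so $n-k-1=rm$ is a positive multiple of $r$ and therefore $m\geq 1$. Now (\ref{condition}) asserts that the product above is strictly positive; since $m\geq 1$ makes the factor $m-1$ nonnegative, positivity forces both $m\neq 1$ and $m-2(k+1)(r-1)>0$, that is $m>2(k+1)(r-1)$. Recalling that $n-(k+1)r=rm-(k+1)(r-1)$ (immediate from $n=rm+k+1$), we conclude
\[
n-(k+1)r = rm-(k+1)(r-1) > 2r(k+1)(r-1)-(k+1)(r-1) = (k+1)(r-1)(2r-1) \geq 0,
\]
which is precisely the reformulation of (\ref{claim}) obtained above. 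This completes the proof.
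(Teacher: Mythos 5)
Your proof is correct; I checked the closed forms, the identity $e(EX_2(n,t,r)) - e(EX_2(n-1,t+1,r)) - (n-1) = n-(k+1)r$, and the factorisation $2\bigl(e(EX_2(n-1,t+1,r))-e(EX_1(n-1,t+1,r))\bigr) = (m-1)\bigl(m-2(k+1)(r-1)\bigr)$ with $m=\frac{n+t}{r}-k$, and all of them hold. The overall strategy is the same as the paper's: reduce \eqref{claim} to a linear inequality, view \eqref{condition} as a quadratic inequality in the remaining parameter, and use the divisibility hypotheses to exclude the unwanted branch. Where you genuinely diverge is in how that branch is excluded. The paper works with the real thresholds $f_1(n,r)\leq t\leq f_2(n,r)$ characterising when $e(EX_2)\leq e(EX_1)$, so the negation splits into $t<f_1$ (the good case) and $t>f_2$, and the latter is ruled out by a Chinese Remainder Theorem argument on the congruences $t\equiv -1 \pmod{r-1}$ and $t\equiv -n\pmod{r}$. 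You instead fold both congruences into the single integer $m=\frac{n+t}{r}-\frac{t+1}{r-1}$; then $rm=n-k-1>0$ forces $m\geq 1$, which kills the branch $m<1$ (the analogue of $t>f_2$) in one line, and the factorisation with its root at $m=1$ makes the remaining implication $m>2(k+1)(r-1)\Rightarrow n>(k+1)r$ immediate. Your version buys a cleaner, purely algebraic case elimination at the cost of having to verify the factorisation identity; the paper's version is more pedestrian arithmetic but avoids needing that identity.
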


\begin{proof}
Let $k:=\frac{t+1}{r-1}$. Since $r-1$ divides $t+1$, we can compute $e(EX_2(n,t,r))$ and $e(EX_2(n-1,t+1,r))$ explicitly:
$$e(EX_2(n,t,r)) = {n\choose 2} - {k\choose 2} - k(n-k),$$
$$e(EX_2(n-1,t+1,r)) = {n-1\choose 2} - {k+1\choose 2} - (k+1)(n-k-r).$$
It follows that 
$$e(EX_2(n,t,r))- e(EX_2(n-1,t+1,r))=(n-1) + k +(n-k-r)-kr.$$
Rearranging this, one obtains that  \eqref{claim} holds precisely when 
\begin{equation*}\label{gnreq}
    t \leq n - r - \frac{n}{r} =: g(n,r).
\end{equation*}
Further, one can calculate that $e(EX_2(n-1,t+1,r)) \leq e(EX_1(n-1,t+1,r))$ precisely when \begin{equation}\label{f1f2eq}
    f_1(n,r) := \frac{n(r-1)}{(2r^2 - 2r + 1)} - r \leq t \leq n(r-1) - r^2 =: f_2(n,r).
\end{equation}
Since \eqref{condition} holds, we have that \eqref{f1f2eq} implies that $t < f_1(n,r)$ or $t > f_2(n,r)$. Observe that $f_1(n,r) \leq g(n,r)$.
Thus, if $t < f_1(n,r)$ then $t < g(n,r)$ and the claim holds. 

Suppose $t > f_2(n,r)$.
We will show that in this case the hypothesis of the lemma cannot actually hold. In particular,
under the assumptions that $r-1$ divides $t+1$, $r$ divides $n+t$ and $t > f_2(n,r)$,
$EX_2(n-1,t+1,r)$ is undefined. Indeed, for a contradiction let us assume that 
$EX_2(n-1,t+1,r)$ is well-defined in this case, thus the inequality $t+1<(r-1)(n-1)$ must hold. By assumption, $t$ satisfies the following modular equations:
\begin{equation} \label{congr}
    t\equiv -1 \quad(\text{mod}\; r-1)\quad\text{and}\quad t\equiv -n\quad(\text{mod}\; r).
\end{equation} 
For $n$ and $r$ fixed, the solution of \eqref{congr} is unique modulo $r(r-1)$ by the Chinese Remainder Theorem, since $r$ and $r-1$ are coprime. Note that $t'=(r-1)(n-1)-1$ is a solution of \eqref{congr}, hence $t=(r-1)(n-1)-1-kr(r-1)$ for some integer $k$. The constraint $t+1<(r-1)(n-1)$ forces $k\geq 1$, hence
\begin{equation*}
t\leq (r-1)(n-1)-1-r(r-1)=n(r-1)-r^2=f_2(n,r).
\end{equation*}
This contradicts the assumption that $t > f_2(n,r)$.
\end{proof}

\begin{proofofmainthm}
We prove Theorem~\ref{mainthm} by induction on $n$. If $n=2$ then $e(G) \in \{0, 1\}$. Since $t < 2(r-1)$ and $r |(n+t)$, we must have $t = r-2$. If $e(G) = 1$ then $G*K_t$ is a copy of $K_r$, contradicting our choice of $G$. Thus $e(G) = 0$. Observe that $k = \lceil\frac{t+1}{r-1}\rceil = 1$ and $q = r-2$. Thus \[\max\left\{\binom{n}{2} - \binom{\frac{n+t}{r} + 1}{2}, \binom{n}{2} - \binom{k}{2} - k(n - k - (r - 2 - q))\right\} = \max\{0,0\} = 0.\] Thus Theorem~\ref{mainthm} holds for $n = 2$.

For the inductive step, we may 
assume without loss of generality that $G$ is a graph on $n$ vertices such that $e(G)$ is maximal with respect to the property that $G*K_t$ does not contain a $K_r$-factor.

\smallskip

{\it Case (i): $G$ contains an isolated vertex $v$.}
 If $t < r-2$, then one could add a single edge to $v$ and $G$ would still not contain a $K_r$-factor, contradicting our choice of $G$. Hence $t \geq r-2$. If $t = r - 2$, then $k = \lceil\frac{t+1}{r-1}\rceil= 1$ and $q = r-2$. Hence 

\begin{align*}
   &\max\left\{\binom{n}{2} - \binom{\frac{n+t}{r} + 1}{2}, \binom{n}{2} - \binom{k}{2} - k(n - k - (r - 2 - q))\right\} \\= &\max\left\{\binom{n}{2} - \binom{\frac{n-2}{r} + 2}{2}, \binom{n-1}{2}\right\}.
\end{align*} Since $G$ contains at least one isolated vertex, \[e(G) \leq \binom{n-1}{2} \leq \max\left\{\binom{n}{2} - \binom{\frac{n-2}{r} + 2}{2}, \binom{n-1}{2}\right\},\] and we are done. 
If $t \geq r-1$, consider the graph $G'$ obtained by deleting $v$ from $G$. Since $G*K_t$ does not contain a $K_r$-factor, $G'*K_{t-r+1}$ does not contain a $K_r$-factor. Thus, by our inductive hypothesis
$$e(G')\leq\max\{e(EX_1(n-1,t-r+1,r)), e(EX_2(n-1,t-r+1,r))\}.$$
It follows from $e(G)=e(G')$ and $e(EX_i(n-1,t-r+1,r))\leq e(EX_i(n,t,r))$\footnote{This holds since $EX_i(n-1,t-r+1,r)$ can be obtained by removing an appropriate vertex from $EX_i(n,t,r)$, for $i=1,2$.} for $i=1,2$ that $e(G)\leq\max\{e(EX_1(n,t,r)), e(EX_2(n,t,r))\}$. 

\smallskip

{\it Case (ii): $G$ contains a vertex of degree $n-1$.}
Consider the graph $G'$ obtained by deleting such a vertex from $G$. Note that $G*K_t=G'*K_{t+1}$.
If $t+1\geq (r-1)(n-1)$, then trivially $G'*K_{t+1}$ contains a $K_r$-factor, a contradiction.
So we must have that $t+1< (r-1)(n-1)$ and
hence by induction
\begin{equation*}\label{starteq}
    e(G')\leq\max\{e(EX_1(n-1,t+1,r)), e(EX_2(n-1,t+1,r))\}.
\end{equation*} 
Observe that $e(G)=e(G')+n-1$.  We aim to show that 
\begin{equation}\label{inductioneq}
    e(G)\leq\max\{e(EX_1(n,t,r)), e(EX_2(n,t,r))\}.
\end{equation} 
If $e(G')\leq e(EX_1(n-1,t+1,r)$ then
\begin{equation*}
e(G*K_t) = e(G'*K_{t+1}) \leq e(EX_1(n-1,t+1,r)*K_{t+1}) = e(EX_1(n,t,r)*K_t)
\end{equation*}
and thus
\begin{equation*}
e(G) \leq e(EX_1(n,t,r)).
\end{equation*}
Similarly, if $e(G')\leq e(EX_2(n-1,t+1,r)$ and $r-1$ does not divide $t+1$ then
\begin{equation*}
e(G*K_t) = e(G'*K_{t+1}) \leq e(EX_2(n-1,t+1,r)*K_{t+1}) = e(EX_2(n,t,r)*K_t)
\end{equation*}
and thus
\begin{equation*}
e(G) \leq e(EX_2(n,t,r)).
\end{equation*}
It remains to check that  \eqref{inductioneq} holds in the case when 
$(r-1)|(t+1)$ and
\begin{equation*}\label{ex2eq}
    e(EX_1(n-1,t+1,r)) < e(EX_2(n-1,t+1,r)).
\end{equation*}
In this case Lemma~\ref{technicallemma} implies that
\begin{equation*} 
e(G)=e(G')+ n - 1 \leq e(EX_2(n-1,t+1,r)) + (n-1) \leq e(EX_2(n,t,r)),
\end{equation*}
as desired.

\smallskip

{\it Case (iii): $G$ contains no  vertex  of degree $0$ or $n-1$.}
If $G$ contains an edge which is not contained in any copy of $K_r$, then by Lemma~\ref{edgelemma} there exists a graph $G'$ on $n$ vertices such that $G'*K_t$ does not contain a $K_r$-factor, $e(G)\leq e(G')$ and $G'$ has a vertex of degree $n-1$. The argument from Case (ii) then implies that $e(G) \leq e(G') \leq \max\{e(EX_1(n,t,r)), e(EX_2(n,t,r))\}$.

We may therefore assume every edge of $G$ is contained in some copy of $K_r$. Moreover, as no vertex in $G$ has degree $0$, every vertex in $G$ is contained in a copy of $K_r$.
Let $w$ be a vertex of smallest degree in $G$. 
If $d_{G}(w) \geq n-\frac{n+t}{r}$ then 
$$\delta(G*K_t) \geq n-\frac{n+t}{r}+t=\frac{r-1}{r}(n+t).$$
Hence by Theorem~\ref{hs}, we have that $G*K_t$ contains a $K_r$-factor, a contradiction.

Thus $d_{G}(w)<n-\frac{n+t}{r}$. Let $K$ be a copy of $K_r$ in $G$ containing $w$. Consider the graph $G'$ obtained by removing $K$ and all its vertices from $G$.
Then $G'*K_t$ does not contain a $K_r$-factor; this implies that $t<(r-1)(n-r)$. Hence, by our inductive hypothesis, we have
$$e(G')\leq\max\{e(EX_1(n-r,t,r)), e(EX_2(n-r,t,r))\}.$$

If $e(G')\leq e(EX_1(n-r,t,r))$ then $d_{G}(w)<n-\frac{n+t}{r}$ implies $e(G)\leq e(EX_1(n,t,r))$, as desired: this follows from the fact that $EX_1(n-r,t,r)$ can be obtained by removing a clique $Q$ of size $r$ from $EX_1(n,t,r)$ where $Q$ has one vertex of degree $n-\frac{n+t}{r}-1$.
By Lemma~\ref{vertexlemma}, we have that $\Delta(G)\leq n-1-\lceil\frac{t+1}{r-1}\rceil$ since we assumed $G$ contains no vertices of degree $n-1$ and $e(G)$ is maximal with respect to the property that $G*K_t$ does not contain a $K_r$-factor. Thus, if $e(G')\leq e(EX_2(n-r,t,r))$ then applying this observation yields that $e(G)\leq e(EX_2(n,t,r))$, as desired: similarly as before, this follows from the fact that $EX_2(n-r,t,r)$ can be obtained by removing a clique $Q$ of size $r$ from $EX_2(n,t,r)$ where all vertices in $Q$ have degree $n-1-\lceil\frac{t+1}{r-1}\rceil$. \qed
\end{proofofmainthm}

\section{Proof of Theorem~\ref{bandwidththm}}\label{proofofbandwidththmsection}

In the proof of Theorem~\ref{bandwidththm} we will make use of the following theorem of Knox and Treglown~\cite{knox}.

\begin{thm}[Knox and Treglown~\cite{knox}]\label{knoxthm}
Given any $\Delta \in \mathbb N$ and any $\gamma >0$, there exists constants $\beta >0$ and $n_0 \in \mathbb N$
such that the following holds. Suppose that $H$ is a bipartite graph on $n \geq n_0$ vertices with $\Delta (H) \leq \Delta$ 
and bandwidth at most $\beta n$. Let $G$ be a graph on $n$ vertices with degree sequence $d_1\leq \dots \leq d_n$. If
$$d_{i} \geq i+\gamma n \ \text{ for all } \ i < n/2$$
then $G$ contains a copy of $H$.
\end{thm}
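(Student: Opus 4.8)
The plan is to prove Theorem~\ref{knoxthm} via the Regularity--Blow-up method, following the template of the Bandwidth theorem (Theorem~\ref{bst}) but replacing the minimum-degree hypothesis by the degree-sequence hypothesis \emph{at the level of the reduced graph}. Fix a constant hierarchy $1/n_0 \ll \beta \ll \eps \ll d \ll \gamma, 1/\Delta$. First I would apply the degree form of Szemer\'edi's Regularity Lemma to $G$ with parameters $\eps$ and $d$, obtaining an $\eps$-regular partition $V_0, V_1, \dots, V_k$ with $|V_1| = \dots = |V_k| = m$ and $|V_0| \le \eps n$, together with the associated reduced graph $R$ on vertex set $[k]$, where $ij \in E(R)$ precisely when $(V_i, V_j)$ is $\eps$-regular of density at least $d$. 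Here $k$ is bounded in terms of $\eps$ and $d$ only, so $\beta n \ll m$.

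The key new step is to \emph{transfer the degree-sequence condition from $G$ to $R$}. Writing $d^R_{(1)} \le \dots \le d^R_{(k)}$ for the sorted degree sequence of $R$, I would show $d^R_{(i)} \ge i + \gamma' k$ for all $i < k/2$, where $\gamma' := \gamma - d - 2\eps > 0$. The argument is a counting one: by the degree form, every vertex $v \in V_i$ has at most $d_R(i)\,m + (d+\eps)n$ neighbours in $G$, since all but $(d+\eps)n$ of its neighbours lie in clusters adjacent to $i$ in $R$. Hence if some $i_0 < k/2$ violated the claim, then at least $i_0$ clusters would have $R$-degree below $i_0 + \gamma' k$, and every one of the $\ge i_0 m$ vertices in these clusters would have $G$-degree at most $(i_0 + \gamma' k)m + (d+\eps)n < i_0 m + \gamma n$ (using $km \le n$ and the choice of $\gamma'$). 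Setting $j := i_0 m < n/2$, this exhibits at least $j$ vertices of $G$ of degree strictly below $j + \gamma n$, contradicting the hypothesis $d_j \ge j + \gamma n$. Since this condition forces $d^R_{(i)} > i$ for every $i < k/2$, Chv\'atal's Hamiltonicity criterion holds (vacuously), so $R$ contains a Hamilton cycle $C$.

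With a Hamilton cycle $C = c_1 c_2 \cdots c_k c_1$ of the reduced graph in hand, the remainder is the standard spanning embedding along a cyclic system of regular pairs. I would clean each pair $(V_{c_j}, V_{c_{j+1}})$ into a super-regular pair by moving the few deficient vertices into $V_0$, then redistribute $V_0$ so that the cluster sizes can be tuned to match the sizes dictated by $H$. Using the bandwidth labelling of $H$ together with its bipartiteness, I would cut $V(H)$ into consecutive blocks so that, after a proper two-colouring aligned with $C$, every edge of $H$ falls inside one of the super-regular pairs along the cycle; the hypotheses $\Delta(H) \le \Delta$ and $\mathrm{bw}(H) \le \beta n$ are exactly what make this block assignment possible. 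A single application of the Blow-up Lemma of Koml\'os, S\'ark\"ozy and Szemer\'edi then embeds $H$ into $G$. For this final stage one can essentially invoke the embedding lemmas developed in the proof of Theorem~\ref{bst}, since from this point on only the structure of $C$ as a Hamilton cycle of super-regular pairs is used, and no global minimum-degree condition on $G$ is needed.

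I expect the main obstacle to be the degree-sequence transfer together with the balancing needed for the embedding. The transfer is delicate because a single cluster may contain vertices of widely varying $G$-degree, so the contradiction must be drawn from the aggregate count of low-degree vertices across several clusters rather than from any one cluster being individually ``low-degree''; balancing the error terms $d$ and $\eps$ against the slack $\gamma$ is precisely why the hierarchy must be chosen carefully. The balancing issue --- matching the two colour classes of $H$ and the block sizes to the adjustable cluster sizes along $C$, including any parity constraint coming from the length of $C$ --- is the most bookkeeping-intensive part, though it is handled by now-standard techniques once the Hamilton cycle of $R$ is available.
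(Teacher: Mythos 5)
You should first note that the paper contains no proof of Theorem~\ref{knoxthm}: it is quoted as a black box from~\cite{knox}, where the actual argument derives it from a more general embedding theorem for robust expanders (the paper says as much after the statement). So the comparison here is against the proof in~\cite{knox}. Your first two steps are sound and broadly parallel the opening of that proof: applying the degree form of the regularity lemma and transferring the hypothesis to the reduced graph $R$ works (your counting argument, with $i_0 m < (k/2)m \le n/2$, correctly converts a violation in $R$ into a violation of $d_j \ge j + \gamma n$ in $G$), and the resulting Chv\'atal-type condition $d^R_{(i)} \ge i + \gamma' k$ does give a Hamilton cycle in $R$. The difference is that Knox and Treglown transfer a \emph{robust expansion} property (which the degree-sequence condition implies, via a lemma in the K\"uhn--Osthus theory) rather than the raw degree sequence, and this difference is not cosmetic, as the next point shows.

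The genuine gap is your final step, specifically the claim that ``from this point on only the structure of $C$ as a Hamilton cycle of super-regular pairs is used, and no global minimum-degree condition on $G$ is needed.'' This is false, and it is exactly the hard part of the theorem. If every edge of $H$ must land in a pair of the cycle $C$, then the block-to-pair assignment forces the two colour classes of $H$ to rotate consistently around $C$ (a transition from pair $(c_j,c_{j+1})$ to $(c_{j+1},c_{j+2})$ that does not respect the alternation places adjacent vertices of $H$ in the same cluster), which yields a parity obstruction when $k$ is odd; worse, if $H$ is connected and has unbalanced colour classes (say sizes $n/4$ and $3n/4$, achievable with bounded degree and bandwidth $O(1)$), every window of $H$ demands an asymmetric split across each pair, so a bare cycle of equal-sized clusters cannot host $H$ at all, no matter how the blocks are cut. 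The proof of Theorem~\ref{bst} in~\cite{bot} resolves these issues using the minimum degree of the reduced graph (at least $(1/2+\gamma/2)k$ in the bipartite case), which supplies chords, triangles for parity switches, and the freedom to resize clusters; Chv\'atal-Hamiltonicity of $R$ supplies none of this control (your hypothesis does force, e.g., a triangle in $R$, but not one positioned usably on a prescribed Hamilton cycle). This is precisely why~\cite{knox} works with robust expanders: robust expansion of $R$ provides the shifted walks and flexible reroutings needed to fix parity and to redistribute cluster sizes to match an unbalanced $H$. Your outline could be repaired by replacing steps two and three with ``$R$ inherits robust expansion, then run the robust-expander embedding argument'' --- but that is essentially the entire content of~\cite{knox}, not a routine invocation of the machinery of~\cite{bot}.
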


In fact, Knox and Treglown proved a more general result for robust expanders (see \cite[Theorem~1.8]{knox}). We use Theorem~\ref{knoxthm} in a similar way to how Chv\'{a}tal's theorem~\cite{chvatal} is used in the proof of Theorem~\ref{nswcycle} in~\cite{nsw}.

\begin{proof}
Let $\Delta \in \mathbb{N}$ and $\varepsilon > 0$. Define $\gamma > 0$ such that $\gamma \ll \varepsilon, 1/\Delta$. Apply Theorem~\ref{knoxthm} with $\Delta$ and $\gamma$ to produce constants $\beta > 0$ and $n_0 \in \mathbb{N}$ such that
\begin{equation*}\label{hierarchy}
    0 < \frac{1}{n_0} \ll \beta \ll \gamma \ll \varepsilon, \frac{1}{\Delta}.
\end{equation*} 

Let  $t \in \mathbb N$, $n \geq n_0$ and $H$ be an $(n+t)$-vertex graph as in the statement of the theorem.
Suppose that $G$ is a graph on $n $ vertices such that $G*K_t$ does not contain $H$. 
Let $m(G)$ denote the number of missing edges in $G$, that is, $m(G) := \binom{n}{2} - e(G)$. Then proving Theorem~\ref{bandwidththm} is equivalent to proving the following bound on $m(G)$:

 \begin{align}\label{thmbound}
                            m(G) \geq   \begin{cases}
                                            t(n-1) - \binom{t}{2} - \varepsilon n^2\ \ \mbox{if} \ \ t \leq \frac{n}{5} \\[0.5em]
                                           \binom{\lceil\frac{n+t}{2}\rceil + 1}{2} - \varepsilon n^2 \ \ \mbox{if} \ \ t > \frac{n}{5}.
                                        \end{cases}
\end{align}

Let $G' := G*K_t$ and label the vertices of $G'$ as $v_1, \ldots, v_{n+t}$ such that $d_{G'}(v_i) := d_i$ is the degree of vertex $v_i$ and $d_1 \leq d_2 \leq \ldots \leq d_{n+t}$.
Since $G'$ does not contain $H$ as a subgraph, it does not satisfy the degree sequence condition of Theorem~\ref{knoxthm}.
Moreover, $\delta(G') \geq t$, hence there must exist $t - \gamma(n+t) < i \leq \lceil(n+t)/2\rceil - 1$ such that $d_i < i + \gamma(n+t)$. 
From $d_1 \leq \ldots \leq d_i < i + \gamma(n+t) $ we deduce that the number of edges missing from $G'$ is at least 

\begin{align}
    m(G')    \geq \sum_{j=1}^i(n+t-1-d_j) - \binom{i}{2} & > i(n+t-1-i-\gamma(n+t)) - \binom{i}{2}\nonumber \\ 
            & \geq i((1 - 2\gamma)(n + t) - i) - \binom{i}{2} =: f(i).\label{mg'eq}
\end{align}

% Observe that we have $m(G') = m(G)$ and also that $m(G') = f(i)$ if and only if all missing edges are incident to $S$ and $S$ is an independent set (this observation will be useful later when we give our extremal constructions). 
Set $u := \lceil(n+t)/2\rceil - 1$. Now $f(i)$ is a quadractic in $i$ and $\frac{d^2(f(i))}{di^2} < 0$. Also, note that $m(G) = m(G')$. Hence, as $t - \gamma(n+t) < i \leq u$, we have from \eqref{mg'eq} that
\begin{equation}\label{mgbound}
    m(G) \geq \min\{f(t - \gamma(n+t) ), f(u)\}.
\end{equation}
% To reiterate, if $i \notin \{t - \gamma(n+t) + 1, u\}$ then we have a strict inequality. 
One can calculate that 
\begin{equation}\label{fineq}
    f(t - \gamma(n+t) ) \leq f(u) \ \ \mbox{if and only if} \ \ t \leq \begin{cases}
    \frac{n - 2\gamma n + 8}{5 + 2\gamma} \ \ \mbox{if $n + t$ is even}  \\[0.5em]
    \frac{n - 2\gamma n + 5}{5 + 2\gamma} \ \ \mbox{if $n + t$ is odd}.
    \end{cases}
\end{equation} 
As  $\frac{1}{n} \ll \gamma \ll \varepsilon$ we have

\begin{equation}\label{bandworkingeq1}
    f(t - \gamma(n+t)) \geq t(n-1) - \binom{t}{2} - \varepsilon n^2
\end{equation}
and
\begin{equation}\label{bandworkingeq2}
    f(u) \geq \binom{\lceil\frac{n+t}{2}\rceil + 1}{2} - \frac{\varepsilon n^2}{2}.
\end{equation} 
Moreover, for $\frac{n - 2\gamma n + 5}{5 + 2\gamma} \leq t \leq \frac{n}{5}$
we have 
\begin{equation}\label{bandworkingeq3}
    f(u) \geq \binom{\lceil\frac{n+t}{2}\rceil + 1}{2} - \frac{\varepsilon n^2}{2} \geq t(n-1) - \binom{t}{2} - \varepsilon n^2.
\end{equation}
Regardless of the parity of $n+t$, using \eqref{mgbound}--\eqref{bandworkingeq3} we conclude that \eqref{thmbound} holds.  
\end{proof}

\section{Concluding remarks}\label{conclusionsec}
In this paper we resolved the deficiency problem for $K_r$-factors. For a general fixed graph $H$, it would be interesting to prove deficiency results regarding $H$-factors. As a starting point for this problem we pose the following question.
Let  $\alpha(H)$ denote the size of the largest independent set in $H$. 

\begin{question}\label{q}
Let $K := K_n$ and $A \subseteq K$ such that $A = K_{\frac{\alpha(H)(n+t)}{|H|} + 1}$. Define $EX_H(n,t)$ to be the graph obtained by removing $E(A)$ from $K$.
Does there exist a constant $c:=c(H)>0$ such that if $t\geq c n$ and $G$ is an $n$-vertex graph so that
$G*K_t$ does not contain an $H$-factor then $e(G) \leq e(EX_H(n,t))+o(n^2)$? 
\end{question}
Note that Theorem~\ref{bandwidththm} answers this question in the affirmative e.g. for $H=K_{s,s}$ (for fixed $s \in \mathbb N$).
On the other hand, at least for some $H$ one cannot remove the $o(n^2)$
term in Question~\ref{q} completely. Indeed, let $H =K_{1,s}$ where $s \geq 2$ and consider the $n$-vertex graph $EX'_H(n,t)$ obtained from $EX_H(n,t)$
by adding a maximal matching in $V(A)$. It is easy to see that
$EX'_H(n,t)*K_t$ does not contain a $K_{1,s}$-factor. 
This example suggests it might be rather challenging to resolve the $H$-factor deficiency problem completely for all graphs $H$.

It would also be interesting to prove bandwidth deficiency results in the vein of Theorem~\ref{bandwidththm} for non-bipartite graphs $H$.

\section*{Acknowledgment}
The authors are grateful to the referee for a helpful and careful review.

\medskip

{\footnotesize \obeylines \parindent=0pt
\begin{tabular}{lll}
	Andrea Freschi, Joseph Hyde \& Andrew Treglown \\
	School of Mathematics 						\\
	University of Birmingham					\\
	Birmingham											 \\
	B15 2TT														 \\
	UK															
\end{tabular}
}
\begin{flushleft}
{\it{E-mail addresses}:
\tt{ $\{$axf079, jfh337, a.c.treglown$\}$@bham.ac.uk}}
\end{flushleft}

\end{document}